\newtheorem{theorem}{Theorem}[section]
\newtheorem{conjecture}[theorem]{Conjecture}
\newtheorem{proposition}[theorem]{Proposition}
\newtheorem{lemma}[theorem]{Lemma}
\theoremstyle{definition}
\newtheorem{definition}[theorem]{Definition}
\theoremstyle{remark}
\newtheorem{remark}[theorem]{Remark}
\newcommand{\F}{\mathbb F}
\newcommand{\Z}{\mathbb Z}
\newcommand{\f}{\mathfrak f}
\DeclareMathOperator{\cha}{char}
\DeclareMathOperator{\ord}{ord}
\DeclareMathOperator{\Gal}{Gal}
\DeclareMathOperator{\Tr}{Tr}
\newcommand{\C}{\mathcal C}
\begin{document}
  \title[Translates of completely normal elements]{Translates of completely normal elements and the Morgan-Mullen conjecture}
  
  \author{Theodoulos Garefalakis}
  \address{Department of Mathematics \& Applied Mathematics, University of Crete, Voutes Campus, 70013 Heraklion, Greece}
  \email{tgaref@uoc.gr}
  \author{Giorgos Kapetanakis}
  \address{Department of Mathematics, University of Thessaly, 3rd km Old National Road Lamia-Athens, 35100 Lamia, Greece}
  \email{kapetanakis@uth.gr}
  \date{\today}
  \subjclass[2020]{11T30; 11T23; 12E20}
  \keywords{finite fields; primitive elements; normal basis; completely normal basis; Morgan-Mullen conjecture; character sums}
  \begin{abstract}
    Denote by $\F_q$ the finite field of order $q$ and by $\F_{q^n}$ its extension of degree $n$. Some $a\in\F_{q^n}$ is called \emph{primitive} if it generates the multiplicative group $\F_{q^n}^*$ and it is called \emph{$q^n/q$-normal} if its $\F_q$-conjugates form an $\F_q$-basis of $\F_{q^n}$ if the latter is viewed as an $\F_q$-vector space. Furthermore, some $a\in\F_{q^n}$ is called \emph{$q^n/q$-completely normal} if it is $q^n/q^d$-normal for all $d\mid n$. In this work we prove a new construction of sets of completely normal elements and we establish, under conditions, the existence of elements that are simultaneously primitive and $q^n/q$-completely normal, covering some yet unresolved cases of a 30-year-old conjecture by Morgan and Mullen.
  \end{abstract}
  \maketitle
  \section{Introduction} \label{sec:intro}
  Throughout this text $q$ will denote a power of the prime $p$ and $n$ a positive integer. Further, $\F_q$ will stand for the finite field of order $q$ and $\F_{q^n}$ for its extension of degree $n$, while for any $d\mid n$, $\F_{q^d}$ will stand for the corresponding intermediate extension.
  
  Some $a\in\F_{q^n}$ is called \emph{primitive} if it generates the multiplicative group $\F_{q^n}^*$. It is well-known that every finite field contains primitive elements, see \cite{lidlniederreiter97}*{Theorem~2.8}, while primitive elements of finite fields are important for both theoretical and practical reasons, with their most notable applications found in areas such as cryptography and coding theory.
  
  On the other hand, some $a\in\F_{q^n}$ is called \emph{$q^n/q$-normal} (or \emph{normal over $\F_q$} or just \emph{normal}) if its $\F_q$-conjugates, that is the set 
  \[ \{\sigma(a) : \sigma\in\Gal(\F_{q^n}/\F_q) \} = \{a, a^q,\ldots ,a^{q^{n-1}}\}, \]
   forms an $\F_q$-basis of $\F_{q^n}$ when the latter is viewed as an $\F_q$-vector space. A basis of this form is called an \emph{$q^n/q$-normal basis}. The existence of these elements is known.
  \begin{theorem}[Normal Basis Theorem] \label{thm:nbt}
     There exists some $a\in\F_{q^n}$ that is $q^n/q$-normal.
  \end{theorem}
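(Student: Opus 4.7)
The plan is to exploit the $\F_q[x]$-module structure on $\F_{q^n}$ induced by the Frobenius $\sigma \colon a \mapsto a^q$. Concretely, I would define the action of $x$ on $\F_{q^n}$ to be $\sigma$, so that any polynomial $f = \sum c_i x^i \in \F_q[x]$ acts by $f \cdot a = \sum c_i\,\sigma^i(a) = \sum c_i\, a^{q^i}$. In this language, an element $a \in \F_{q^n}$ is $q^n/q$-normal precisely when the orbit $\{a, \sigma(a), \ldots, \sigma^{n-1}(a)\}$ is $\F_q$-linearly independent, equivalently, when no nonzero polynomial of degree less than $n$ annihilates $a$. Since $\sigma^n = \mathrm{id}$ on $\F_{q^n}$, the annihilator of any $a$ is a nonzero ideal containing $(x^n-1)$, so normality is the same as requiring $\mathrm{Ann}_{\F_q[x]}(a) = (x^n-1)$.

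Next, I would invoke the structure theorem for finitely generated modules over the PID $\F_q[x]$: the $\F_q[x]$-module $\F_{q^n}$ decomposes into a direct sum of cyclic modules whose invariant factors multiply (in degree) to $n$ and whose top invariant factor equals the minimal polynomial $\mu_\sigma$ of $\sigma$ acting on $\F_{q^n}$. An element with annihilator $x^n-1$ exists if and only if $\F_{q^n}$ is a cyclic $\F_q[x]$-module, which in turn is equivalent to $\deg \mu_\sigma = n$, i.e.\ to $\mu_\sigma = x^n - 1$.

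It therefore suffices to show that no polynomial of degree less than $n$ in $\sigma$ vanishes identically on $\F_{q^n}$. For this I would appeal to Artin's theorem on the linear independence of characters (or equivalently, Dedekind's lemma): the automorphisms $\sigma^0, \sigma^1, \ldots, \sigma^{n-1}$ are pairwise distinct elements of $\Gal(\F_{q^n}/\F_q)$, hence distinct homomorphisms $\F_{q^n}^* \to \F_{q^n}^*$, and therefore $\F_{q^n}$-linearly independent. In particular they are $\F_q$-linearly independent as $\F_q$-linear endomorphisms of $\F_{q^n}$, so $\mu_\sigma$ has degree at least $n$; combined with $\mu_\sigma \mid x^n - 1$ this forces $\mu_\sigma = x^n - 1$, which completes the proof.

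The main obstacle is really the single step of justifying $\deg \mu_\sigma = n$; everything else is bookkeeping around the structure theorem. If one prefers to avoid Artin's theorem on characters, an alternative is to note that $\sigma$ has order exactly $n$ in $\Gal(\F_{q^n}/\F_q)$, so its characteristic polynomial and minimal polynomial on the $n$-dimensional space $\F_{q^n}$ both divide $x^n-1$ and share the same roots in $\overline{\F_q}$; a counting or degree comparison then forces equality with $x^n-1$.
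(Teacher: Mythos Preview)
Your proposal is correct: the $\F_q[x]$-module viewpoint together with Artin's theorem on linear independence of characters gives a standard and complete proof that $\mu_\sigma = x^n-1$, hence that $\F_{q^n}$ is a cyclic $\F_q[x]$-module and admits a generator, which is precisely a normal element. Note, however, that the paper does not actually prove Theorem~\ref{thm:nbt}; it simply cites \cite{lidlniederreiter97}*{Theorem~2.35}. So there is no in-paper proof to compare against, and your argument stands on its own.

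One small caution about the alternative you offer at the end: the remark that the minimal and characteristic polynomials of $\sigma$ ``share the same roots'' and that a ``counting or degree comparison then forces equality with $x^n-1$'' is not a complete argument when $p = \cha(\F_q)$ divides $n$, since then $x^n-1$ is not squarefree and having the same root set does not by itself pin down the degree of $\mu_\sigma$. If you want to avoid Artin's theorem entirely, you need a separate treatment of the $p\mid n$ case (e.g.\ reduce to the $p\nmid n$ case via the tower $\F_q \subset \F_{q^{p^k}} \subset \F_{q^n}$, or argue directly with linearized polynomials). The main line of your argument via Artin's theorem has no such gap.
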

  For a proof of the above, we refer the reader to \cite{lidlniederreiter97}*{Theorem~2.35}. Although primitive elements are more commonly employed for practical purposes than normal elements, the latter category is important for both theoretical and practical reasons on its own right, with their most notable applications found in areas where efficient finite field arithmetic is important, such as coding theory or radar technology.
  
  A natural question is whether there exist elements of $\F_{q^n}$ that combine both of the above properties. The answer to that question was given by the following celebrated result.
  \begin{theorem}[Primitive Normal Basis Theorem] \label{thm:pnbt}
    There exists some $a\in\F_{q^n}$ that is simultaneously primitive and $q^n/q$-normal.
  \end{theorem}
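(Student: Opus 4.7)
The plan is to establish existence by a character-sum argument, showing directly that the weighted count of elements that are simultaneously primitive and normal is strictly positive. Let $\omega$ and $\Omega$ denote the indicator functions of the primitive elements and of the $q^n/q$-normal elements of $\F_{q^n}$, respectively. The goal is then to prove
\begin{equation*}
  N \;:=\; \sum_{a\in\F_{q^n}} \omega(a)\,\Omega(a) \;>\; 0.
\end{equation*}

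For $\omega$, I would use the classical Vinogradov-type expansion via multiplicative characters of $\F_{q^n}^*$, namely
\begin{equation*}
  \omega(a) \;=\; \frac{\phi(q^n-1)}{q^n-1}\sum_{d\mid q^n-1}\frac{\mu(d)}{\phi(d)}\sum_{\ord(\chi)=d}\chi(a),
\end{equation*}
with the convention $\chi(0)=0$. For $\Omega$, the key observation is that $\F_{q^n}$ carries the structure of a cyclic $\F_q[x]$-module under the Frobenius action $x\cdot a = a^q$, and that $a$ is $q^n/q$-normal exactly when its $\F_q[x]$-annihilator is the full polynomial $x^n-1$. This polynomial analogy yields a formally identical expansion
\begin{equation*}
  \Omega(a) \;=\; \frac{\Phi_q(x^n-1)}{q^n}\sum_{F\mid x^n-1}\frac{\mu_q(F)}{\Phi_q(F)}\sum_{\ord(\psi)=F}\psi(a),
\end{equation*}
in which $\mu_q$ and $\Phi_q$ are the polynomial analogues of the Möbius and Euler functions and the inner sum runs over additive characters of $\F_{q^n}$ whose $\F_q[x]$-order equals $F$.

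Substituting both expansions into $N$ and interchanging the order of summation reduces the problem to estimating hybrid Gauss-type sums of the form
\begin{equation*}
  G(\chi,\psi) \;=\; \sum_{a\in\F_{q^n}}\chi(a)\,\psi(a).
\end{equation*}
The pair of trivial characters contributes a positive main term proportional to $\phi(q^n-1)\,\Phi_q(x^n-1)$, while every other pair contributes an error controlled by the Weil bound $|G(\chi,\psi)|\le q^{n/2}$. Bounding the number of surviving character pairs by the product $W(q^n-1)\,W_q(x^n-1)$ of squarefree divisor counts (integer and polynomial, respectively), I would deduce that $N>0$ holds as soon as $q^{n/2}$ exceeds an explicit expression in these counts.

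The principal obstacle is that this sufficient inequality fails for finitely many small pairs $(q,n)$, and these exceptional cases must be treated by hand. The classical strategy, following Lenstra and Schoof, combines a sieving refinement that replaces the crude divisor sums by sums indexed by the prime (respectively, prime-polynomial) divisors of $q^n-1$ and $x^n-1$, with direct computer verification for the residual list. I expect the most delicate step to be setting up the polynomial dual framework for $\Omega$ so that the same Gauss-sum machinery applies symmetrically in both variables, and then carrying out the sieving bookkeeping cleanly enough to bring the set of exceptions within reach of explicit computation.
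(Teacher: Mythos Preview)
The paper does not supply a proof of Theorem~\ref{thm:pnbt}; it is quoted as background and attributed to Lenstra--Schoof~\cite{lenstraschoof87}, with the computer-free version due to Cohen--Huczynska~\cite{cohenhuczynska03}. Your outline is precisely the Lenstra--Schoof character-sum strategy the paper cites (and whose ingredients---the Vinogradov expansions for $\omega$ and $\Omega$, the Gauss-sum bound, and the $W$-type divisor counts---are recalled in Section~\ref{sec:prelim}), so there is nothing to compare beyond noting that your sketch ends with a finite computer check, whereas the Cohen--Huczynska refinement via the prime sieve eliminates that step entirely.
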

  The above was initially established in 1987 by Lenstra~Jr{.} and Schoof~\cite{lenstraschoof87}, but Cohen and Huczynska~\cite{cohenhuczynska03} gave a computer-free proof in 2003. Both of these works employ the \emph{character sum method}, which we will also employ in this work. Roughly speaking, this method is used in existence questions. In its typical application, the number of elements of interest is expressed as a sum that involves characters. Then, the sum is split in two terms: the \emph{main term} and the \emph{error term}. The main term corresponds to all the characters being trivial and the error term is the remaining sum. The existence assertion follows, should the difference of the two terms be nonzero.
  
  A natural question that arises from Theorem~\ref{thm:nbt} is whether some $a\in\F_{q^n}$ can be normal over the intermediate extensions of $\F_{q^n}/\F_q$. In particular, $a\in\F_{q^n}$ is called \emph{$q^n/q$-completely normal} (or just \emph{completely normal}) if it is $q^n/q^d$-normal for all $d\mid n$ and the existence of such elements has also been established.
  \begin{theorem}[Completely Normal Basis Theorem] \label{thm:cnbt}
     There exists some $a\in\F_{q^n}$ that is $q^n/q$-completely normal.
  \end{theorem}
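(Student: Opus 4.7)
The plan is to combine the Normal Basis Theorem (applied at each intermediate level) with a union-bound argument on the ``non-normal'' elements. For each divisor $d \mid n$, denote by $\mathcal{N}_d\subseteq\F_{q^n}$ the set of $q^n/q^d$-normal elements and set $f_d(x)=x^{n/d}-1\in\F_{q^d}[x]$. Viewing $\F_{q^n}$ as an $\F_{q^d}[x]$-module with $x$ acting as the Frobenius $\sigma^d$, Theorem~\ref{thm:nbt} applied to the extension $\F_{q^n}/\F_{q^d}$ asserts that this module is cyclic with annihilator $(f_d)$, and $a\in\mathcal{N}_d$ precisely when the $\F_{q^d}[x]$-order of $a$ equals $f_d$. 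The first step is to translate complete normality into an explicit avoidance condition: $a$ is completely normal iff, for every $d\mid n$ and every monic irreducible $p\mid f_d$ in $\F_{q^d}[x]$, the element $a$ is \emph{not} annihilated by $(f_d/p)(\sigma^d)$.

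The second step is a dimension count. For fixed $d$ and irreducible $p\mid f_d$, the kernel $\ker\bigl((f_d/p)(\sigma^d)\bigr)$ is an $\F_{q^d}$-subspace of $\F_{q^n}$ of dimension $n/d-\deg p$, hence has cardinality $q^{n-d\deg p}$. Summing over all $d\mid n$ and all distinct monic irreducible $p\mid f_d$ yields
\begin{equation*}
  \Bigl|\F_{q^n}\setminus\textstyle\bigcap_{d\mid n}\mathcal{N}_d\Bigr|\leq\sum_{d\mid n}\sum_{\substack{p\mid f_d\\p\text{ irred.}}}q^{n-d\deg p},
\end{equation*}
and the theorem reduces to showing the right-hand side is strictly less than $q^n$. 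When $q$ is not too small compared with $n$, this follows already from the crude bound that each $f_d$ has at most $n/d$ distinct irreducible factors.

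The main obstacle is the remaining range, where $q$ is small relative to $n$ and the union bound is too weak on its own. To close these cases I would refine the argument in the spirit of the character-sum method highlighted in the introduction: re-express each indicator $[a\in\mathcal{N}_d]$ via Möbius inversion on the divisors of $f_d$ in $\F_{q^d}[x]$ together with the additive characters of $\F_{q^n}$, multiply the resulting expressions over all $d\mid n$, and sum over $a\in\F_{q^n}$. Orthogonality isolates a positive main term, while the error decomposes into sums over tuples $(h_d)_{d\mid n}$ of non-trivial divisor data that can be bounded using standard estimates for additive character sums on $\F_{q^n}$. The genuinely delicate technical point, and the main obstacle one expects in this range, is controlling the cross terms arising from the interaction between different levels $d$: the additive characters of $\F_{q^n}$ viewed through $\sigma^d$ for different $d$ are not independent, and taming this interaction is essentially the module-theoretic content underlying the Blessenohl--Johnsen proof.
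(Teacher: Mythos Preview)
The paper does not itself prove Theorem~\ref{thm:cnbt}; it records the result and cites Blessenohl--Johnsen and Hachenberger. The only trace of the argument visible inside the paper is Proposition~\ref{propo:hb2}: if $n=n_1n_2$ with $\gcd(n_1,n_2)=1$ and $a_i\in\F_{q^{n_i}}$ is $q^{n_i}/q$-completely normal for $i=1,2$, then $a_1a_2$ is $q^n/q$-completely normal. Hachenberger's proof uses this multiplicative construction to reduce the existence question to the case where $n$ is a prime power, and then settles that case by a direct module-theoretic argument. Your approach is structurally unrelated to this reduction.

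More importantly, your proposal contains a genuine gap. The union-bound step is correct as far as it goes, and you rightly note that it fails when $q$ is small relative to $n$; already $q=2$, $n=6$ gives $\sum_{d\mid n}\sum_{p\mid f_d}q^{-d\deg p}>1$. The problem is the patch you offer for this range. Multiplying the indicators $\varOmega_d$ over all $d\mid n$ and summing over $a\in\F_{q^n}$ does \emph{not} produce a main term plus a Weil-type error: every character in sight is an additive character of the \emph{same} group $\F_{q^n}$, so each inner sum $\sum_{a}\prod_{d}\psi_d(a)$ is either $0$ or $q^n$, never $O(q^{n/2})$. What emerges is an exact inclusion--exclusion formula for the number of completely normal elements, and showing that this alternating sum is positive \emph{is} the theorem. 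Indeed, when the authors themselves use precisely this expansion in \cite{garefalakiskapetanakis18}, they take Theorem~\ref{thm:cnbt} as an \emph{input} to guarantee positivity of the main term, not as an output. Your closing sentence effectively concedes the point: saying that controlling the cross terms ``is essentially the module-theoretic content underlying the Blessenohl--Johnsen proof'' is a citation, not an argument. The small-$q$ case is therefore not proved in your proposal.
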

  The above was initially established in 1986 by Blessenohl and Johnsen~\cite{blessenohljohnsen86}, but with general Galois extensions in mind. Later on, in 1994, Hachenberger~\cite{hachenberger94} gave a simplified proof that is specific to finite field extensions.
  
  A simple glimpse to Theorems~\ref{thm:nbt}, \ref{thm:pnbt} and \ref{thm:cnbt} immediately leads to wondering about the existence of elements of $\F_{q^n}$ that are, at the same time, primitive and completely normal. Thus, we say that the extension $\F_{q^n}/\F_q$ possesses the \emph{MM property} if it contains an element $a\in\F_{q^n}$ that is primitive and completely normal. In fact, in 1996 by Morgan and Mullen~\cite{morganmullen96} stated the following.
  \begin{conjecture}[Morgan-Mullen] \label{conj:mm}
     Every finite field extension possesses the MM property.
  \end{conjecture}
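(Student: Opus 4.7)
The plan is to count the number $N_q(n)$ of elements of $\F_{q^n}$ that are simultaneously primitive and $q^n/q$-completely normal by the character sum method, and prove $N_q(n)>0$. The starting point is to write $N_q(n)$ as a sum over $a\in\F_{q^n}$ of a product of indicator functions. For primitivity, I would use the standard multiplicative-character expansion coming from Möbius inversion over divisors of $q^n-1$. For the condition that $a$ is $q^n/q^d$-normal, I would use the characterization that the $\F_{q^d}[\sigma_d]$-order of $a$ (with $\sigma_d$ the $q^d$-power Frobenius) equals $x^{n/d}-1\in\F_{q^d}[x]$, writing its indicator as a Möbius-weighted sum over additive characters of $\F_{q^n}$ associated to proper divisors of $x^{n/d}-1$. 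Multiplying the indicator for primitivity by the indicators for $q^n/q^d$-normality for each $d\mid n$ expresses $N_q(n)$ as a multicharacter sum.

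Next, I would split this sum into a main term and an error term. The main term, coming from all characters being trivial, equals $q^n\cdot\theta(q^n-1)\cdot\prod_{d\mid n}\Theta_d$, where $\theta(q^n-1)=\varphi(q^n-1)/(q^n-1)$ and $\Theta_d$ is the corresponding $\F_{q^d}$-analogue measuring the density of $q^n/q^d$-normal elements. The error term is bounded by a weighted sum of absolute values of mixed character sums over $\F_{q^n}$, each of which admits a Weil-type estimate of size $O(q^{n/2})$. Collecting the estimates yields a sufficient condition of the shape
\[
q^{n/2}\;>\;W(q^n-1)\cdot\prod_{d\mid n}W_d,
\]
where $W(\cdot)$ and $W_d$ count the relevant squarefree divisors. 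Whenever this holds, $N_q(n)>0$ and the extension possesses the MM property.

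To push the range where the inequality can be verified, I would apply a Cohen-style prime sieve to trade the full Möbius sum for partial sums indexed by small primes, and combine it with the structural results of Hachenberger that produce explicit families of completely normal elements: this should allow a reduction from the full completely normal condition to normality over a small list of intermediate fields. Explicit low-genus cases and small $n$ would be handled by direct computation, the mid-range by the sieved inequality, and the asymptotic range by the crude bound above.

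The main obstacle is bounding the error term uniformly when $\cha\F_q\mid n$. In that wild situation the polynomials $x^{n/d}-1$ acquire repeated roots over $\F_{q^d}$, so the lattice of $\F_{q^d}[\sigma_d]$-submodules becomes much larger, the number of additive characters entering the expansion explodes, and the Weil-type bound degrades. Reconciling the resulting error term with the main term forces one either to use finer module-theoretic input on completely normal generators (so as to drop most of the characters from the sum) or to restrict $n$ to shapes compatible with such input, which is precisely why the full conjecture remains unresolved and only conditional cases can be expected to fall to this approach.
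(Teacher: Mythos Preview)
The statement you are attempting to prove is Conjecture~\ref{conj:mm}, which the paper explicitly presents as open and does \emph{not} prove; the paper's contribution is a set of partial results (Theorems~\ref{thm:main_condition}, \ref{thm:main_asymptotic}, \ref{thm:main_concrete}) for the newly introduced class of $(n_1,n_2)$-partially completely basic extensions. Your proposal is not a proof either, and you essentially concede this in your final paragraph. So there is no ``paper's proof'' to compare against, and your sketch should be read as a strategy for partial results, not for the full conjecture.

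On the strategy itself, two remarks. First, the direct route you outline---multiply the characteristic functions $\omega\cdot\prod_{d\mid n}\varOmega_d$ and bound the error by $q^{n/2}\,W(q^n-1)\prod_{d\mid n}W_d$---is essentially what the authors did in their earlier work (Theorems~\ref{thm:gk1} and~\ref{thm:gk2} here), and it stalls not primarily because of wild ramification but because the product $\prod_{d\mid n}W_d$ is combinatorially enormous even in the tame case; already the single factor $W(x^n-1)$ can be as large as $2^n$, which swamps $q^{n/2}$ for small $q$. Second, the paper's new idea is quite different from your plan and from the Hachenberger-type reduction you allude to: rather than detecting complete normality by characters, it \emph{constructs} a full line $\{a_1a_2+c:c\in\F_q\}$ of completely normal elements algebraically (Proposition~\ref{propo:a1a2-1}, Theorem~\ref{thm:completely_normal_translates}), and then runs the character sum only over $b\in\F_{q^{n_2}}$ with the single normality condition $\varOmega_{n_2}$. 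This collapses your product $\prod_{d\mid n}W_d$ to the single factor $W(x^{n_2}-1)$ and replaces the Weil bound over $\F_{q^n}$ by the incomplete bound of Theorem~\ref{thm:char-sum} over the line, yielding the far more tractable condition $q^{n_2/2}>n_1\,W(q^n-1)\,W(x^{n_2}-1)$. That structural input---not a sieve refinement of the global character sum---is what drives the paper's progress.
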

  The above, as we will see in some detail in Section~\ref{sec:previous}, has been partially established. 
  In particular, we know that certain families possess the MM property and the conjecture has been directly verified by computational means, for numerous \emph{small} finite field extensions, while no counterexample has been found. This evidence strongly suggests the validity of this conjecture, which is, however, not yet fully resolved.
  
  The aim of this work is to make an additional step towards the proof of Conjecture~\ref{conj:mm}, by establishing that new families of finite field possess the MM property, by adding a new weapon to the arsenal found in the literature, for the researcher willing to attack this problem, and by demonstrating that new algebraic constructions of completely normal elements can be used to attack Conjecture~\ref{conj:mm}.
   
  Towards this end,
  we introduce the family of partially completely basic finite fields extensions and establish that, under conditions, these extensions possess the MM property.
  
  This paper is organized as follows.
  In Sect.~\ref{sec:previous} we present the known conditions under which Conjecture~\ref{conj:mm} is known to hold.
  In Sect.~\ref{sec:prelim} we present some background material that we will use along the way.
  In Sect.~\ref{sec:construction} we establish a novel construction of sets of completely normal elements, see Proposition~\ref{propo:a1a2-1} and Theorem~\ref{thm:completely_normal_translates}.
  In Sect.~\ref{sec:main} we introduce the family of \emph{$(n_1,n_2)$-partially completely basic} extensions of finite fields and prove novel conditions that ensures the existence of elements that are primitive and $q^n/q$-completely normal for such extensions, see Theorems~\ref{thm:main_condition} and \ref{thm:main_condition_2}.
  In Sect.~\ref{sec:asymptotic} we establish an asymptotic result for $(n_1,n_2)$-partially completely basic extensions to possess the MM property (see Theorem~\ref{thm:main_asymptotic}).
  In Sect.~\ref{sec:comp} we explore the computational aspects of our work. In particular, we describe an algorithm that takes advantage of our theoretical arguments to establish the MM property for $(n_1,n_2)$-partially completely basic extensions, for fixed $n_1$, see Subsect.~\ref{subsec:algo}. Then, in Subsect.~\ref{subsec:concrete} we use the algorithm to settle the $n_1=2$ and $n_1=3$ cases, see Theorem~\ref{thm:main_concrete}, and comment on the $n_1=4$ case.
  We conclude this work with a small discussion about future additions or improvements to this line of research in Sect.~\ref{sec:discussion}.
  \section{Towards the Primitive Completely Normal Basis Theorem} \label{sec:previous}
We will split our exhibition of previous results depending on the means that workers that attacked Conjecture~\ref{conj:mm} have adopted in the past, rather than a strictly chronological order.
%
%
The first and most obvious method is direct verification with the help of computers. This is in fact the method that Morgan and Mullen~\cite{morganmullen96} used in order to support Conjecture~\ref{conj:mm}.
\begin{theorem}[Morgan-Mullen] \label{thm:mm}
  Suppose that $q\leq 97$ and $q^n<10^{50}$. Then $\F_{q^n}/\F_q$ possesses the MM property.
\end{theorem}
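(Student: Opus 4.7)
The plan is to establish Theorem~\ref{thm:mm} by exhaustive computer verification over the finite set of pairs $(q,n)$ satisfying the hypotheses. First, I would enumerate all such pairs: the number of prime powers $q \leq 97$ is modest, and for each such $q$ the constraint $q^n < 10^{50}$ bounds $n$ by roughly $n < 50\log 10 / \log q$, so $n$ ranges from at most about $166$ (when $q=2$) down to just $25$ (when $q=97$). This produces an explicit, finite catalogue of extensions $\F_{q^n}/\F_q$ to test.

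For each extension, I would realize $\F_{q^n}$ concretely via a fixed irreducible polynomial of degree $n$ over $\F_q$ and search for an element that is simultaneously primitive and $q^n/q$-completely normal. To test primitivity of $a \in \F_{q^n}^*$, I precompute the distinct prime factors $p_1, \ldots, p_k$ of $q^n - 1$ and verify that $a^{(q^n - 1)/p_i} \neq 1$ for each $i$. To test $q^n/q^d$-normality for a given divisor $d \mid n$, I would use the Frobenius-module description: $a$ is $q^n/q^d$-normal if and only if the $\F_{q^d}[x]$-order of $a$ (under $x \mapsto \sigma^d$, where $\sigma$ is the $q$-Frobenius) equals $x^{n/d} - 1$. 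Iterating over all $d \mid n$ gives a complete-normality test. Rather than walking blindly through $\F_{q^n}$, I would prioritize candidate elements that are already known (or likely) to have good normality behaviour, for instance by scanning small translates of a fixed $q^n/q$-normal element produced by a normal-basis construction, and terminate as soon as a valid element is found.

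The main obstacle is the computational cost, which is dominated by two tasks: factoring $q^n - 1$ for the larger values of $q^n$, which is required in order to test primitivity; and performing the normality check over every intermediate field $\F_{q^d}$ for each divisor $d$ of $n$. For $q^n$ approaching $10^{50}$ the integer $q^n - 1$ can have large prime factors whose factorization is nontrivial, but the Cunningham project and its successors supply complete factorizations throughout the range considered. With these factorizations available, and with heuristic density estimates predicting that a nontrivial proportion of $\F_{q^n}$ consists of primitive completely normal elements, one expects the search to terminate rapidly for each individual pair $(q,n)$, and the overall verification is then a finite (if substantial) computation, which is exactly what was carried out in~\cite{morganmullen96}.
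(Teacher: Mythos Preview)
Your proposal is correct and matches the paper's treatment: Theorem~\ref{thm:mm} is not proved in this paper at all but is simply quoted as the computational verification carried out in~\cite{morganmullen96}, and your outline accurately describes that kind of exhaustive computer search over the finitely many pairs $(q,n)$ in range.
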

Later still, with the help of more modern equipment, advanced computational methods and theoretical arguments, Hachenberger and Hackenberg~\cite{hachenbergerhackenberg19} extended the original Morgan-Mullen verifications as follows.
\begin{theorem}[Hachenberger-Hackenberg] \label{thm:hh}
  Suppose that
  \begin{enumerate}
    \item $n\leq 202$ or
    \item $q< 10^4$ and $q^n < 10^{80}$.
  \end{enumerate}
  Then, $\F_{q^n}/\F_q$ possesses the MM property.
\end{theorem}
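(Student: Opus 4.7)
The plan is to combine theoretical reductions with direct computer verification. Following the pattern of the partial results towards Conjecture~\ref{conj:mm}, one first uses the character sum method to produce a sufficient condition of the form $q^n\ge B(n)$, with $B$ explicit, guaranteeing that $\F_{q^n}/\F_q$ enjoys the MM property. This localises each of the two ranges in the statement to a finite, though very large, list of exceptional pairs $(q,n)$ that must be handled by direct computation. Where possible, one should also dispatch entire families at once using structural normality results (for example, when $n$ is a prime power coprime to $q$, the lattice of $q$-modules governing complete normality simplifies drastically), to peel large swathes off the to-do list before starting brute search.

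For each remaining pair one searches $\F_{q^n}$ for an element that is simultaneously primitive and $q^n/q$-completely normal. A raw enumeration is infeasible when $q^n$ approaches $10^{80}$, so one must exploit structure: start from an element $a$ known to be $q^n/q$-completely normal, whose existence is guaranteed by Theorem~\ref{thm:cnbt} and which can be produced via the explicit constructions available in many cases, and then sweep through its $\F_q^*$-multiples, its $\Gal(\F_{q^n}/\F_q)$-orbit, or its translates $\lambda a+b$, testing primitivity for each candidate. Primitivity of $a$ is tested by verifying $a^{(q^n-1)/\ell}\neq 1$ for every prime divisor $\ell$ of $q^n-1$, while complete normality is tested by checking, for each $d\mid n$, that the $\Gal(\F_{q^n}/\F_{q^d})$-conjugates of $a$ are $\F_{q^d}$-linearly independent; both tests admit standard efficient implementations via linearised polynomials.

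The principal obstacle is twofold. First, one must factor $q^n-1$ for each surviving pair in order to run the primitivity check; since $q^n$ can reach $10^{80}$, this forces reliance on extensive precomputed tables such as those of the Cunningham project, supplemented by ECM and the number field sieve for stubborn cofactors. Second, the sheer combinatorial size of the second range means that even a second per pair is unaffordable, so the search strategy must be tuned so that most candidates are drawn from cheap translates of a fixed completely normal element and so that failure can be detected early via lightweight preconditioning before the expensive primitivity test is invoked. Once these engineering issues are managed, the verification is, at least in principle, purely mechanical and yields both cases of the stated range.
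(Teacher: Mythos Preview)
The paper does not prove this theorem; it is quoted in Section~\ref{sec:previous} as a result of Hachenberger and Hackenberg, with the sole indication that it was obtained ``with the help of more modern equipment, advanced computational methods and theoretical arguments'' and a citation to \cite{hachenbergerhackenberg19}. There is therefore no proof in the paper to compare your proposal against.

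As for your proposal itself: it is not a proof but a plausible outline of a large computational project, and it broadly matches the one-line description the paper gives. That said, several of your claimed shortcuts are not actually available. Complete normality is in general \emph{not} preserved under $\F_q^*$-scaling, Galois action, or arbitrary $\F_q$-translation (contrast with ordinary normality), so ``sweeping through $\lambda a+b$'' from a single completely normal $a$ does not stay inside the set of completely normal elements; the construction in the present paper (Proposition~\ref{propo:a1a2-1}) shows translates are completely normal only under special hypotheses on $a$. Hence your search strategy as stated would require re-testing complete normality for every candidate, not just primitivity. Also, the reduction ``$q^n\ge B(n)$ implies MM'' is precisely what is \emph{not} known in general---the existing sufficient conditions (Theorems~\ref{thm:hb1}, \ref{thm:gk1}, \ref{thm:gk2}) all require $q$ to dominate $n$, not the other way around, so they do not cut the range $n\le 202$ down to finitely many $q$. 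In short, your sketch names the right ingredients (character-sum bounds, structural reductions, factorization tables, efficient normality/primitivity tests) but glosses over exactly the difficulties that make \cite{hachenbergerhackenberg19} a substantial piece of work rather than a routine verification.
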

%
%
However, most of the advances towards establishing Conjecture~\ref{conj:mm} have been achieved with theoretical means. A first observation is that all completely normal elements are, by definition, normal, but the inverse is true only for certain finite field extensions. So, the following definition is essential.
\begin{definition} \label{def:comp_basic_extensions}
Let $\F_{q^n}/\F_q$ be such that every $q^n/q$-normal element is $q^n/q$-completely normal. Then the extension $\F_{q^n}/\F_q$ is called \emph{completely basic}.
\end{definition}
For the case of completely basic extensions, Conjecture~\ref{conj:mm} is directly implied by Theorem~\ref{thm:pnbt}, thus, if $\F_{q^n}/\F_q$ is completely basic, it possesses the MM property. Clearly, if $r$ is prime, $\F_{q^r}/\F_q$ is completely basic, but this is not the only family of completely basic extensions. In fact, we have the following complete characterization of this family by Blessenohl and Johnsen~\cite{blessenohljohnsen91}.
\begin{theorem}[Blessenohl-Johnsen] \label{thm:completely_basic_characterization}
The finite field extension $\F_{q^n}/\F_q$ is completely basic if and only for every prime divisor $r$ of $n$, $r\nmid d$, where $d$ stands for the order of $q$ modulo the relatively prime to $q$ part of $n/r$.
\end{theorem}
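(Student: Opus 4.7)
The plan is to work with the $\F_q[x]$-module structure on $\F_{q^n}$ where $x$ acts as the Frobenius $\sigma\colon a \mapsto a^q$. By Theorem~\ref{thm:nbt}, this module is isomorphic to $\F_q[x]/(x^n-1)$. In this language, $a \in \F_{q^n}$ is $q^n/q$-normal precisely when it is a cyclic generator of this module, equivalently when its $\F_q[x]$-annihilator is exactly $(x^n-1)$. Likewise, $a$ is $q^n/q^d$-normal exactly when it is a cyclic generator of $\F_{q^n}$ viewed as an $\F_{q^d}[y]$-module with $y = x^d$ acting as $\sigma^d$. Hence $\F_{q^n}/\F_q$ is completely basic iff, for every $d \mid n$, every cyclic $\F_q[x]$-generator of $\F_{q^n}$ is also a cyclic $\F_{q^d}[y]$-generator.

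Next, I would decompose $x^n - 1$ into irreducibles over $\F_q$. Writing $n = p^a m$ with $p = \cha \F_q$ and $\gcd(p, m) = 1$, we have $x^n - 1 = \prod_{e \mid m} \Phi_e(x)^{p^a}$, and each cyclotomic polynomial $\Phi_e$ factors into $\phi(e)/\ord_e(q)$ irreducibles of common degree $\ord_e(q)$. This yields a primary decomposition $\F_{q^n} = \bigoplus_\Phi V_\Phi$, and normality translates into a nondegeneracy condition on each $V_\Phi$. The analogous decomposition over $\F_{q^d}$ is generally finer, since each $\F_q$-irreducible factor of degree $k$ splits over $\F_{q^d}$ into $\gcd(k,d)$ pieces. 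The failure of complete basicity corresponds exactly to a cyclic generator of some $V_\Phi$ failing to generate each refined component under this splitting.

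Third, by an inductive argument on the chain of intermediate extensions, I would reduce to the case of a single prime intermediate step: $\F_{q^n}/\F_q$ is completely basic iff for every prime divisor $r$ of $n$, every $q^n/q$-normal element is also $q^n/q^r$-normal. For such an $r$, only $\F_q$-irreducible factors of $x^n - 1$ whose degrees share a common factor with $r$ can split over $\F_{q^r}$, and since $r$ is prime this splitting occurs iff $r \mid \ord_e(q)$ for some $e$ dividing the $q$-coprime part $m'$ of $n/r$; that is, iff $r \mid d$ where $d = \ord_{m'}(q)$. When $r \nmid d$, no refinement occurs for any relevant component and normality is automatically preserved; conversely, when $r \mid d$, one constructs an $\F_q$-normal element whose projection to some refined $\F_{q^r}$-component vanishes, violating $q^n/q^r$-normality.

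The main obstacle is handling the $p$-part when $p \mid n$: the components corresponding to $\Phi_e^{p^a}$ are not simple $\F_q[x]/\Phi$-modules but $\F_q[x]/\Phi^{p^a}$-modules, and cyclic generation requires a subtler nondegeneracy condition modulo $\Phi$. Fortunately, the refinement of coefficients from $\F_q$ to $\F_{q^d}$ does not interact with the nilpotent structure induced by $p \mid n$, so the $p$-part contributes no obstruction to complete basicity, which explains why the arithmetic condition involves only the $q$-coprime part $m'$ of $n/r$. Reassembling the two analyses via the Chinese Remainder Theorem on $\F_q[x]/(x^n-1)$ completes the proof in both directions.
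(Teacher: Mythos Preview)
The paper does \emph{not} prove this theorem. It is merely quoted as a known result of Blessenohl and Johnsen, with the citation \cite{blessenohljohnsen91}, and no argument is given in the paper. Hence there is no ``paper's own proof'' to compare your proposal against.

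That said, your outline is headed in the right direction and is broadly consistent with how such characterizations are established (via the $\F_q[x]$-module structure of $\F_{q^n}$ and the behaviour of irreducible factors of $x^n-1$ under extension of scalars). A few points would need to be made precise before this could be called a proof. First, the reduction ``completely basic $\Leftrightarrow$ for every prime $r\mid n$, every $q^n/q$-normal element is $q^n/q^r$-normal'' is not immediate: transitivity of normality through a tower is the subtle direction, and you should state (or prove) the lemma that if $a$ is $q^n/q$-normal and $q^n/q^r$-normal for every prime $r\mid n$, then $a$ is $q^n/q^d$-normal for all $d\mid n$. Second, the comparison you want is not really between the factorizations of $x^n-1$ over $\F_q$ and over $\F_{q^r}$, but between the $\F_q[x]$-module $\F_q[x]/(x^n-1)$ (with $x=\sigma$) and the $\F_{q^r}[y]$-module $\F_{q^r}[y]/(y^{n/r}-1)$ (with $y=\sigma^r$); the bookkeeping linking maximal submodules on the two sides, and the reason why only $e$ dividing the $p$-coprime part of $n/r$ enter, should be spelled out. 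Third, the converse step (``when $r\mid d$, construct a $q^n/q$-normal element that is not $q^n/q^r$-normal'') is asserted but not carried out; this is the heart of the ``only if'' direction and deserves an explicit construction. None of these is a fatal obstruction, but as written the proposal is a sketch rather than a proof.
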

In particular, see \cite{hachenberger13}*{Example~5.4.19}, this family includes cases such as $n=r^2$ (where $r$ is a prime number), $n\mid q-1$ or $n=p^k$, where $k$ is arbitrary and $p=\cha{\F_q}$.

Another family that is of interest for us is the following.
\begin{definition}
If $\gcd(n,d)=1$, where $d$ is the order of $q$ modulo the product of the prime divisors of $n$ that do not divide $q$, then $\F_{q^n}/\F_q$ is called a \emph{regular extension}.
\end{definition}
Completely basic extensions are clearly regular extensions. But, see \cite{hachenberger13}*{Example~5.4.47}, this family also include extensions $\F_{q^n}/\F_q$, where the set $D = \{ r\mid n : r\text{ prime} \}$ satisfies
\begin{enumerate}
  \item $|D|=1$, or, 
  \item for every $r\in D$ we have that $r\mid q-1$ or $r\mid q$, or,
  \item $D\subseteq \{7,$ $ 11,$ $ 13,$ $ 17,$ $ 19,$ $ 31,$ $ 41,$ $ 47,$ $ 49,$ $ 61,$ $ 73,$ $ 97,$ $ 101,$ $ 107,$ $ 109,$ $ 139,$ $ 151,$ $ 163,$ $ 167,$ $ 173,$ $ 179,$ $ 181,$ $ 193\}$.
\end{enumerate}
Also, $\F_{q^n}/\F_q$ is regular if $n$ is a power of a Carmichael number.
It has been established that regular extensions possess the MM property.
\begin{theorem}[Hachenberger] \label{thm:regular_extensions}
Regular extensions possess the MM property.
\end{theorem}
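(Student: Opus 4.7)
My approach is to attack Theorem~\ref{thm:regular_extensions} by the same character sum method used for Theorem~\ref{thm:pnbt}, with the $q^n/q$-normal indicator upgraded to a $q^n/q$-completely normal one. I would first express the count of primitive completely normal elements of $\F_{q^n}$ as
\[
  N \;=\; \sum_{a \in \F_{q^n}^*} \omega(a)\, \Omega(a),
\]
where $\omega(a)$ is the standard primitivity indicator, expanded as a sum over multiplicative characters of $\F_{q^n}^*$, and $\Omega(a)$ is an indicator for complete normality. The existence assertion is then equivalent to proving $N > 0$.

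\textbf{Assembling $\Omega$.} The $q^n/q^d$-normality of an element $a\in\F_{q^n}$ is equivalent to saying that $a$ generates $\F_{q^n}$ as a cyclic $\F_{q^d}[x]$-module, with $x$ acting as the $q^d$-th power Frobenius. A M\"obius-type inversion over the divisors of $x^{n/d}-1$ in $\F_{q^d}[x]$ produces a ``$q^d$-free'' indicator as a linear combination of additive characters of $\F_{q^n}$. Taking the product of these indicators over all $d\mid n$ yields $\Omega(a)$. The regularity hypothesis is crucial at this stage: it is precisely the condition that makes the Hachenberger decomposition theory render the module structures over the successive intermediate fields $\F_{q^d}$ compatible with the prime factorization of $n$. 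In consequence, the a priori sum indexed by tuples $(f_d)_{d\mid n}$ of module divisors collapses to a single sum indexed by divisors built from the primes of $n$, with a much smaller number of surviving terms.

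\textbf{Main versus error term.} Substituting the expansions of $\omega$ and $\Omega$ into $N$ yields a double sum over a multiplicative character $\chi$ of $\F_{q^n}^*$ and an additive character $\psi$ of $\F_{q^n}$. The contribution of the pair $(\chi,\psi)=(\chi_0,\psi_0)$ is the positive main term, which equals $\phi(q^n-1)/(q^n-1)$ times the number of $q^n/q$-completely normal elements of $\F_{q^n}$. Every other pair is controlled by the Weil-type estimate $|\sum_a \chi(a)\psi(a)| \leq \sqrt{q^n}$. Summing over the surviving characters produces an error term of the shape $W(q^n-1)\cdot K(n,q)\cdot \sqrt{q^n}$, where $W$ counts squarefree divisors and $K(n,q)$ is the number of additive characters produced by the regular decomposition of $\Omega$.

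\textbf{Main obstacle.} The principal difficulty is establishing the inequality ``main term $>$ error term'' uniformly over the infinite family of regular extensions, since neither $W(q^n-1)$ nor $K(n,q)$ is trivially small. I would sharpen the bound through a Cohen--Huczynska style sieve, which replaces $K(n,q)$ by a sum over only the ``bad'' primes, and then verify the resulting inequality asymptotically. The finitely many residual small cases would be dispatched by invoking the computational verification of Theorem~\ref{thm:hh}, which would make the argument complete.
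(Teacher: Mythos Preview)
The paper does not prove Theorem~\ref{thm:regular_extensions}; it is quoted from Hachenberger's series of papers \cites{hachenberger01,hachenberger10,hachenberger19}, and the paper explicitly notes that its establishment ``required deep understanding of the underlying module structure of finite fields and combines algebraic and character sum methods.'' So there is no in-paper proof to compare against, but your proposal can still be assessed on its own merits, and it has two genuine gaps.

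\textbf{First gap: the collapse under regularity is asserted, not proved.} You write that regularity ``makes the Hachenberger decomposition theory render the module structures over the successive intermediate fields $\F_{q^d}$ compatible,'' so that the product $\prod_{d\mid n}\varOmega_d$ collapses to a single short sum. This is the entire heart of the matter, and you have not supplied any mechanism for it. Naively multiplying the indicators $\varOmega_d$ over all $d\mid n$ produces an additive-character count $K(n,q)$ of size roughly $\prod_{d\mid n} W_{q^d}(x^{n/d}-1)$, which is in general far too large to beat $q^{n/2}$. What Hachenberger actually does in the cited works is use the structure theory of cyclotomic modules to show that, for regular extensions, complete normality is equivalent to freeness over a very specific and much smaller family of submodules; this is a substantial algebraic theorem, not a bookkeeping identity, and it does not fall out of the character-sum formalism you sketch.

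\textbf{Second gap: the endgame is not finite.} You plan to verify the inequality ``main $>$ error'' asymptotically and then invoke Theorem~\ref{thm:hh} for the residue. But Theorem~\ref{thm:hh} only covers $n\le 202$ or $(q<10^4$ and $q^n<10^{80})$, and you give no argument that your asymptotic inequality covers every regular extension outside that range. Indeed, the present paper's own Sections~\ref{sec:asymptotic}--\ref{sec:concrete} illustrate how delicate this is: even with a much simpler additive factor $W(x^{n_2}-1)$ in place of your $K(n,q)$, closing the gap for small $q$ requires case-by-case work and is only carried out for $n_1\in\{2,3\}$. Without an explicit bound on $K(n,q)$ you cannot claim the leftover is finite, let alone contained in the range of Theorem~\ref{thm:hh}.
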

The above was initially partially established by Hachenberger~\cites{hachenberger01,hachenberger10} and, subsequently, fully established by the same author \cite{hachenberger19}. Furthermore, we note that, the establishment of Theorem~\ref{thm:regular_extensions} required deep understanding of the underlying module structure of finite fields and combines algebraic and character sum methods.

Last but not least, some efforts have been made with the adoption of combinatorial and character sum methods. Namely, in 2016, Hachenberger~\cite{hachenberger16} used elementary combinatorial arguments in order to obtain the following.
\begin{theorem}[Hachenberger] \label{thm:hb1}
Suppose that
\begin{enumerate}
  \item $q\geq n^{7/2}$ and $n\geq 7$, or,
  \item $q\geq n^3$ and $n\geq 37$,
\end{enumerate}
then $\F_{q^n}/\F_q$ possesses the MM property.
\end{theorem}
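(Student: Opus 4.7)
The strategy I would adopt is purely combinatorial, avoiding the character sum machinery: writing $N_{\mathrm{cn}}$ for the set of $q^n/q$-completely normal elements of $\F_{q^n}$ and $N_{\mathrm{pr}}$ for the set of primitive ones, it suffices to establish
\[
|N_{\mathrm{cn}}| + |N_{\mathrm{pr}}| > q^n,
\]
because this forces $N_{\mathrm{cn}} \cap N_{\mathrm{pr}} \neq \emptyset$ and produces the desired element. The hope, under the hypothesis $q \gg n$, is that each of $|N_{\mathrm{cn}}|$ and $|N_{\mathrm{pr}}|$ differs from $q^n$ only by an error term of order $q^{n-1}$ times a polynomial in $n$, so their sum clearly exceeds $q^n$.

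To bound $|N_{\mathrm{cn}}|$ from below, I would argue by inclusion--exclusion over the divisor lattice of $n$: for each $d\mid n$, the set of elements failing to be $q^n/q^d$-normal is a union of proper $\F_{q^d}[x]$-submodules of $\F_{q^n}$, and their total size can be bounded explicitly via the module-theoretic framework due to Hachenberger. Keeping only the top-order term in $q$ at each level yields an estimate of the form $|N_{\mathrm{cn}}| \geq q^n - C(n)\, q^{n-1}$ with $C(n)$ a polynomial of small degree in $n$. For $|N_{\mathrm{pr}}|$, I would use the elementary inequality
\[
q^n - \varphi(q^n - 1) \leq (q^n - 1) \sum_{\substack{r \mid q^n - 1 \\ r \text{ prime}}} \frac{1}{r},
\]
and control the right-hand side by exploiting the cyclotomic factorization $q^n - 1 = \prod_{d \mid n} \Phi_d(q)$, which yields a bound on the number of distinct prime divisors of $q^n - 1$ that grows only polynomially in $n \log q$.

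Combining these bounds, the inequality $|N_{\mathrm{cn}}| + |N_{\mathrm{pr}}| > q^n$ reduces to a concrete condition of the form $q \geq h(n)$ for an explicit polynomial $h$. One then verifies directly that $h(n) \leq n^{7/2}$ for all $n \geq 7$, which settles case~(1); case~(2) follows from essentially the same inequalities, after noting that for $n \geq 37$ the ratio of error terms to the main term shrinks enough to permit the sharper threshold $q \geq n^3$. The main obstacle I anticipate is producing a bound on the number of prime divisors of $q^n - 1$ sharp enough to avoid a stray $\log q$ factor that would spoil the $n^{7/2}$ condition: the generic estimate $\omega(m) = O(\log m / \log \log m)$ is insufficient here, and one must genuinely leverage the cyclotomic structure of $q^n-1$, splitting the prime divisors into those of $q - 1$ and those contributed by each $\Phi_d(q)$ for $d > 1$, which is where the bulk of the combinatorial care is needed.
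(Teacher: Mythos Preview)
This theorem is not proved in the present paper: it appears in Section~\ref{sec:previous} as a survey of prior work, attributed to Hachenberger~\cite{hachenberger16}, and the paper offers no argument beyond the remark that Hachenberger ``used elementary combinatorial arguments'' based on ``an estimate of the number of $q^n/q$-completely normal elements, that derives from combinatorial arguments, that do not depend on the prime decomposition of $n$ and its relation with $q$.'' So there is no in-paper proof to compare against.

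That said, your overall strategy---forcing $|N_{\mathrm{cn}}| + |N_{\mathrm{pr}}| > q^n$ via separate lower bounds on each set---is precisely the combinatorial framework the paper ascribes to Hachenberger, and your lower bound for $|N_{\mathrm{cn}}|$ via the module structure is in the right spirit. The part of your sketch that is genuinely incomplete is the treatment of $|N_{\mathrm{pr}}|$: your proposed route through counting prime divisors of $q^n-1$ via the cyclotomic factorisation cannot yield a bound independent of $q$, since $q-1$ alone may have arbitrarily many prime factors, and the resulting $\log q$-type loss would indeed destroy a clean threshold of the form $q\geq n^c$. Hachenberger circumvents this not by counting prime divisors at all, but by using a direct analytic lower bound on $\varphi(q^n-1)/(q^n-1)$ (of Mertens/Rosser--Schoenfeld type), which gives a loss of size only $\log n + \log\log q$ rather than anything like $\omega(q^n-1)$; this is what makes the thresholds $n^{7/2}$ and $n^3$ attainable. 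Your instinct that the stray $\log q$ is the crux is correct, but the fix is to abandon the prime-counting approach entirely rather than to refine it.
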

It is worth mentioning that the results of \cite{hachenberger16} depend on an estimate of the number of $q^n/q$-completely normal elements, that derives from combinatorial arguments and does not depend on the prime decomposition of $n$ and its relation with $q$.

Inspired by this work, the authors adjusted the character sum method accordingly for this setting. More precisely, in \cites{garefalakiskapetanakis18b,garefalakiskapetanakis18}, the main term of the authors' application of the method was manipulated in a way that it equaled the number of $q^n/q$-completely normal elements. This lead to the following improvement of Theorem~\ref{thm:hb1} in \cite{garefalakiskapetanakis18}.
\begin{theorem}[Garefalakis-Kapetanakis] \label{thm:gk1}
  If the part of $n$ that is relatively prime to $q$ is smaller than $q$, then $\F_{q^n}/\F_q$ possesses the MM property. In particular, all extensions $\F_{q^n}/\F_q$ with $n\leq q$ possess the MM property.
\end{theorem}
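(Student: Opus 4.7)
The plan is to apply the character sum method, with the key refinement from the authors' earlier work \cite{garefalakiskapetanakis18b} in which the main term is manipulated to coincide with a counting function for completely normal elements. Write $n = p^a m$ with $p = \cha(\F_q)$ and $\gcd(m,p)=1$, so the hypothesis reads $m < q$; the assertion for $n \leq q$ is then a corollary, since the prime-to-$q$ part of $n$ is at most $n$.

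The first step is the standard setup. Express the number $N$ of elements of $\F_{q^n}$ that are simultaneously primitive and $q^n/q$-normal as a double character sum. Primitivity is detected via multiplicative characters of $\F_{q^n}^*$ together with Möbius inversion over divisors of $q^n-1$; normality is detected via additive characters paired with a polynomial Möbius-type function on monic divisors of $x^n - 1 \in \F_q[x]$, exploiting the $\F_q[x]$-module structure on $\F_{q^n}$ given by the Frobenius. Isolating the term in which both Möbius arguments are trivial yields a decomposition $N = M \pm E$, where the error $E$ is controlled by Weil-type estimates of order $q^{n/2}$ times products of divisor counts coming from $q^n-1$ and from $x^n-1$.

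The technical heart of the argument is the reinterpretation of $M$. Using the factorization $x^n-1 = (x^m-1)^{p^a}$ in $\F_q[x]$, together with Hachenberger's combinatorial description of complete normality, one rearranges $M$ so that it equals, up to the density $\phi(q^n-1)/(q^n-1)$ of primitive elements, the exact number $N_{\mathrm{cn}}$ of $q^n/q$-completely normal elements in $\F_{q^n}$. This removes the dependence of the main term on the full prime decomposition of $n$ and reduces the problem to feeding an unconditional combinatorial lower bound on $N_{\mathrm{cn}}$, in the spirit of Theorem~\ref{thm:hb1}, into the inequality $M > E$.

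The final step is verifying that inequality. Substituting the combinatorial lower bound on $N_{\mathrm{cn}}$ and the Weil estimate for $E$ reduces the problem to a comparison of the form $q^{n/2} \cdot h(m) > (\text{divisor factor})$, which becomes favorable precisely in the regime where the prime-to-$q$ part of $n$ is strictly smaller than $q$; any finitely many boundary cases escaping the analytic bound can be cleared using Theorem~\ref{thm:mm}. The main obstacle I anticipate is the identification of $M$ with $N_{\mathrm{cn}}$: carrying out this manipulation so that the resulting expression genuinely counts completely normal elements (rather than merely elements normal over $\F_q$) requires simultaneous bookkeeping of the $\F_{q^d}[x]$-module structure for every divisor $d \mid n$, and is where the main combinatorial work of the proof is concentrated.
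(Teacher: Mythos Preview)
This theorem is not proved in the present paper. It appears in Section~\ref{sec:previous} as part of the survey of prior work and is simply attributed to \cite{garefalakiskapetanakis18}; the paper offers no argument beyond the one-sentence description that, in \cites{garefalakiskapetanakis18,garefalakiskapetanakis18b}, the main term of the character-sum expression was manipulated so as to equal the number of $q^n/q$-completely normal elements. There is therefore nothing in this paper to compare your proposal against.

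That said, your outline is consistent with the brief description the paper gives of the method used in \cite{garefalakiskapetanakis18}: set up the character-sum expression for primitive normal elements, arrange the main term to count completely normal elements exactly, and then feed in a combinatorial lower bound for that count against a Weil-type error. Two minor remarks. First, your citation is slightly off: according to the paper, Theorem~\ref{thm:gk1} is the result of \cite{garefalakiskapetanakis18}, while \cite{garefalakiskapetanakis18b} is the subsequent refinement yielding Theorem~\ref{thm:gk2}. Second, the present paper does \emph{not} proceed in this way at all; its own contribution (Theorems~\ref{thm:main_condition}--\ref{thm:main_concrete}) uses a different mechanism, namely the translate construction of Proposition~\ref{propo:a1a2-1} together with the incomplete-sum estimate of Theorem~\ref{thm:char-sum}, rather than the ``main term equals $N_{\mathrm{cn}}$'' trick you describe.
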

Soon after, with more attention given to technical details, the above was further improved in \cite{garefalakiskapetanakis18b} by the same authors.
\begin{theorem}[Garefalakis-Kapetanakis] \label{thm:gk2}
 If either
 \begin{enumerate}
   \item $n$ is odd and $n<q^{4/3}$, or,
   \item $n$ is even, $q-1\nmid n$ and $n<q^{5/4}$,
 \end{enumerate}
 then $\F_{q^n}/\F_q$ possesses the MM property.
\end{theorem}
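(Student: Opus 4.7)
The plan is to apply the character sum method in the refined form introduced in \cite{garefalakiskapetanakis18}, where the main term of the character sum decomposition is engineered to equal the number of $q^n/q$-completely normal elements, and then to push the error estimates further with additional technical care.

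First, let $\C\subseteq\F_{q^n}$ denote the set of $q^n/q$-completely normal elements. Using the standard expression for the indicator of primitivity via multiplicative characters of $\F_{q^n}^*$, the number $N$ of primitive $q^n/q$-completely normal elements may be written as
$$N = \frac{\phi(q^n-1)}{q^n-1}\sum_{d\mid q^n-1}\frac{\mu(d)}{\phi(d)}\sum_{\ord(\chi)=d}\sum_{a\in\C}\chi(a).$$
The trivial character ($d=1$) contributes the main term $\frac{\phi(q^n-1)}{q^n-1}|\C|$, for which Hachenberger's combinatorial estimate from \cite{hachenberger16} provides a lower bound depending only on elementary data about $n$ and $q$, independent of the module structure of $\F_{q^n}/\F_q$.

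Next, I would bound the contribution from $d>1$. For each nontrivial multiplicative character $\chi$, the inner sum $|\sum_{a\in\C}\chi(a)|$ admits a Weil-type estimate of order $q^{n/2}$ times a factor polynomial in $n$. Combining this with the standard identity $\sum_{d\mid m}|\mu(d)|/\phi(d)=m/\phi(m)$ and controlling the number of squarefree divisors of $q^n-1$, one obtains an inequality of the form
$$N \geq \frac{\phi(q^n-1)}{q^n-1}\bigl(\,|\C|-E(n,q)\,q^{n/2}\,\bigr),$$
where $E(n,q)$ is an explicit function coming from the bounds on the inner character sums together with the summation over divisors. The parity split in the hypotheses enters precisely here: for odd $n$, the quadratic and other low-order characters that would otherwise inflate $E(n,q)$ are absent or easier to handle, whereas in the even case the hypothesis $q-1\nmid n$ excludes the configuration in which the worst-behaved character contributions would appear.

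Finally, it remains to verify that the conditions $n<q^{4/3}$ (odd case) or $n<q^{5/4}$ (even case with $q-1\nmid n$) ensure $|\C|>E(n,q)\,q^{n/2}$, whence $N>0$. The main obstacle will be keeping $E(n,q)$ sufficiently sharp: one must simultaneously control the divisor sum over $q^n-1$, the polynomial-in-$n$ factor in the Weil-type bound, and the combinatorial lower bound for $|\C|$ from \cite{hachenberger16}, while carefully isolating the contribution of the low-order characters according to the parity of $n$. This last balancing act, which improves the range from $n\leq q$ in Theorem~\ref{thm:gk1} to $n<q^{4/3}$, respectively $n<q^{5/4}$, is precisely the \emph{attention to technical details} alluded to in the introduction.
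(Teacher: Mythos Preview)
This theorem is not proved in the present paper at all: it appears in Section~\ref{sec:previous} purely as a citation of prior work \cite{garefalakiskapetanakis18b}, with no accompanying argument. There is therefore no ``paper's own proof'' to compare your sketch against.

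That said, your outline is broadly in the spirit of \cite{garefalakiskapetanakis18b}, but two points in it are off. First, the inner sum $\sum_{a\in\C}\chi(a)$ over the set of completely normal elements does not directly admit a Weil-type bound; the indicator of complete normality must itself be expanded as a product over $d\mid n$ of additive character sums (one for each intermediate extension), so the error term is a \emph{mixed} multiplicative--additive character sum, and the relevant estimate is of Gauss-sum type rather than a pure Weil bound on $\chi$ alone. Second, your explanation of the parity split is misplaced: the hypotheses ``$n$ odd'' and ``$q-1\nmid n$'' do not act on the multiplicative side (quadratic or low-order $\chi$'s), but on the \emph{additive} side, through the factorisation of $x^{n/d}-1$ over $\F_{q^d}$ for the various $d\mid n$. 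Compare Lemma~\ref{lemma:w(x^n-1)}: when $n\mid q-1$ one has $W(x^n-1)=2^n$, whereas $n\nmid q-1$ forces $W(x^n-1)\leq 2^{3n/4}$, and oddness of $n$ yields further savings in the product over intermediate extensions. It is these savings, not anything about multiplicative characters, that push the exponent from $1$ in Theorem~\ref{thm:gk1} to $4/3$ and $5/4$.
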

Here, we remark that, in this section, emphasis was given to the end results regarding the existence of primitive and $q^n/q$-completely normal elements, which is also the object of this work. However, in addition to this question, there are many interesting related questions, such as the distribution of these elements. We refer the interested reader to the references of this work and in particular to related textbooks \cites{hachenberger97,hachenberger13,hackenbergerjungnickel20} and the references therein.
  \section{Preliminaries} \label{sec:prelim}
As we mentioned in Sect.~\ref{sec:intro}, in this work we will take advantage of the character sum method. In this method, characters and their sums play a crucial role in characterizing elements of finite fields with the desired properties and in estimating the number of elements that combine all the desired properties.
\begin{definition} \label{def:char}
Let $\mathfrak{G}$ be a finite abelian group. A \emph{character} of
$\mathfrak{G}$ is a group homomorphism $\mathfrak{G} \to \mathbb{C}^*$. The characters of $\mathfrak{G}$ form a group under multiplication,
which is isomorphic to $\mathfrak{G}$. This group is called the \emph{dual} of
$\mathfrak{G}$ and denoted by $\widehat{\mathfrak{G}}$. Furthermore, the
character $\chi_0 : \mathfrak{G} \to \mathbb{C}^*$, where $\chi_0(g) = 1$ for
all $g\in \mathfrak{G}$, is called the \emph{trivial
character} of $\mathfrak{G}$. Finally, by $\bar\chi$ we denote the inverse of $\chi$ and observe that, for every $g\in\mathfrak G$ and $\chi\in\widehat{\mathfrak G}$, $|\chi(g)|=1$.
\end{definition}
The finite field $\F_{q^n}$ is associated with its multiplicative and its additive group. From now on, we will call the characters of $\F_{q^n}^*$ \emph{multiplicative
characters} and the characters of $\F_{q^n}$ \emph{additive characters}.
Furthermore, we will denote by $\chi_0$ and $\psi_0$ the trivial multiplicative
and additive character respectively and we will extend the multiplicative
characters to zero with the rule
\[ \chi(0) := \begin{cases} 0, & \text{if }\chi\in\widehat{\F_{q^n}^*}
\setminus \{ \chi_0 \} , \\
1, & \text{if } \chi = \chi_0 . \end{cases} \]
A \emph{character sum} is a sum that involves characters. In this work we will use the following incomplete character sum estimate.
%
\begin{theorem} \label{thm:char-sum}
Let $\F_{q^n}/\F_q$ be a finite field extension and take $\chi$ a multiplicative character of $\F_{q^n}$ and $\psi$ an additive character of $\F_q$, such that not both of them are trivial. Further, let $a\in\F_{q^n}$ be such that $\F_q(a)=\F_{q^n}$. Then
\[ \left| \sum_{c\in\F_q} \chi(a +c)\psi(c) \right| \leq n q^{1/2} . \]
\end{theorem}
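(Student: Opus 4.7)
The bound $n q^{1/2}$ is of Weil type and matches what the Riemann hypothesis for curves delivers for a suitable smooth projective curve over $\F_q$. Here $n$ enters as the degree of the minimal polynomial $m_\theta(T) = \prod_{i=0}^{n-1}(T - \theta^{q^i}) \in \F_q[T]$ of $\theta$, which equals $n$ by the hypothesis $\F_q(\theta) = \F_{q^n}$. The plan is to realise the mixed sum $S = \sum_{a \in \F_q}\chi(\theta + a)\psi(a)$ as a Frobenius trace on a curve whose genus scales linearly in $n$.

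First I would dispose of the degenerate cases. If $\chi = \chi_0$, then since $n \geq 2$ forces $\theta \notin \F_q$ (the case $n = 1$ being immediate, as $|S| \leq q \leq q^{1/2} \cdot n$ is either trivial or handled by the classical $\F_q$-estimate), $\theta + a \neq 0$ for every $a \in \F_q$; hence $\chi(\theta + a) = 1$ and, by the standing hypothesis, $\psi \neq \psi_0$, giving $S = \sum_{a \in \F_q}\psi(a) = 0$. If instead $\psi = \psi_0$ and $\chi \neq \chi_0$, the sum reduces to a pure multiplicative character sum, which is handled by the main argument below with the Artin-Schreier piece suppressed.

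For the main case I would construct a smooth projective curve $C$ over $\F_q$ encoding both characters---the multiplicative $\chi$ via a Kummer cover and the additive $\psi$ via an Artin-Schreier cover---and realise $S$ as a Frobenius trace on $C$. The Kummer piece is the delicate ingredient because $\chi$ lives on $\F_{q^n}^*$, not $\F_q^*$: the naive cover $y^d = \theta + T$ with $d = \ord(\chi)$ is defined only over $\F_{q^n}$ and must be descended to $\F_q$ via the orbit-product identity $\prod_{\sigma \in \Gal(\F_{q^n}/\F_q)}\chi^\sigma = \tau \circ N_{\F_{q^n}/\F_q}$ for an appropriate character $\tau$ of $\F_q^*$, a descent which ramifies precisely at the $n$ zeros of $m_\theta$ in $\overline{\F_q}$. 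Combined with the Artin-Schreier piece, Weil's bound applied to $C$ yields $|S| \leq n q^{1/2}$ after extraction of the $\chi$-isotypic contribution to the point count. The main obstacle lies in this descent and in the precise genus tracking needed to match the coefficient $n$ exactly: when the Galois orbit of $\chi$ in $\widehat{\F_{q^n}^*}$ has full size $n$ the descended cover is irreducible over $\F_q$ and the Riemann-Hurwitz calculation is direct, but for smaller orbits the cover splits and one must isolate the irreducible component corresponding to $\chi$ and control its individual contribution to $S$, which is where the assumption $\F_q(\theta) = \F_{q^n}$ is crucial.
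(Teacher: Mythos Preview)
The paper does not give its own proof of this estimate: immediately after the statement it records that the bound follows from the main result of Fu--Wan~\cite{fuwan14}, with a pointer to an alternative simplified argument in~\cite{mazumderkapetanakiskalabasnet25}. There is therefore no in-paper proof to compare against; Fu and Wan obtain the bound via $\ell$-adic sheaf machinery (Weil~II), which handles in one stroke the awkward feature here---a multiplicative character of the \emph{big} field $\F_{q^n}$ evaluated along a line parametrised by the \emph{small} field $\F_q$.

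Your outline aims at the same Weil-type conclusion through an explicit curve, which is reasonable in spirit, but the decisive step---descending the Kummer cover $y^d=\theta+T$ from $\F_{q^n}$ to $\F_q$---does not work via the ``orbit-product identity'' as you state it. The relation $\prod_{\sigma}\chi^{\sigma}=\tau\circ N_{\F_{q^n}/\F_q}$ controls $\chi\bigl(N_{\F_{q^n}/\F_q}(\theta+a)\bigr)=\tau\bigl((-1)^n m_\theta(-a)\bigr)$, not $\chi(\theta+a)$; the two agree only when $\chi$ is trivial on $\ker N_{\F_{q^n}/\F_q}$, which is not assumed. In general $\chi(\theta+a)$ is not the value at $a$ of any $\F_q$-rational function, so no single Kummer cover over $\F_q$ carries it. What one actually needs is the fibre product of the $n$ Galois-conjugate covers $y_i^d=\theta^{q^i}+T$ (which \emph{is} defined over $\F_q$), followed by an isotypic decomposition under the $(\Z/d\Z)^n$-action to extract the $\chi$-component and bound its cohomology; this is precisely the content that the $\ell$-adic formalism in~\cite{fuwan14} packages, and it is the part your sketch flags as ``the main obstacle'' without resolving. (A small side remark: your parenthetical on $n=1$ asserts $q\le q^{1/2}$; the $n=1$ case is rather the classical Weil/Gauss bound over $\F_q$, giving $|S|\le q^{1/2}$ directly.)
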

The above follows from the main result of \cite{fuwan14}. However, an alternative simplified proof was established recently, see \cite{mazumderkapetanakiskalabasnet25}.

The additive and the multiplicative groups of $\F_{q^n}$ can also be seen as modules. In particular $\F_{q^n}^*$ (the multiplicative group) can be seen as a $\Z$-module under the rule $r\circ a = a^r$, where $r\in\Z$ and $a\in\F_{q^n}$. The fact that $\F_{q^n}$ contains primitive elements implies that this module is cyclic.


By using Vinogradov's formula for generators of cyclic modules over Euclidean domains, it follows that the characteristic function for $a\in\F_{q^n}$ being primitive is
\[
\omega(a) := \theta(q') \sum_{d\mid q'} \frac{\mu(d)}{\phi(d)} \sum_{\chi\in\widehat{\F_{q^n}^*},\ \ord(\chi)=d} \chi(a),
\]
where $q'=q^n-1$, $\theta(r)=\phi(r)/r$, $\mu$ is the M\"obius function, $\phi$ is the Euler function and the \emph{order} of a multiplicative character is defined as its multiplicative order in $\widehat{\F_{q^n}^*}$. Also, we stress that $\F_{q^n}^*$ contains $\phi(q')$ primitive elements.

Regarding the additive group, for any $d\mid n$, $\F_{q^{d}}$ can be seen as an $\F_{q}[x]$-module under the rule $f\circ a = \f(a)$, where $\f$ stands for the \emph{$q$-associate} of $f\in\F_{q}[x]$, that is, for $f=\sum_{i=0}^k f_ix^i$, then
 $\f = \sum_{i=0}^k f_ix^{q^{i}}$.
 
Theorem~\ref{thm:nbt} implies that for all $d\mid n$, the corresponding module is cyclic.  Similarly, the characteristic function for $a\in\F_{q^{d}}$ being ${q^{d}}/{q}$-normal is
\[
\varOmega_{d}(a) := \theta_q(F_d') \sum_{f\mid F_{d}'} \frac{\mu_q(f)}{\phi_q(f)} \sum_{\psi\in\widehat{\F_{q^d}}, \ \ord(\psi)=f} \psi(a) ,
\]
 where $F_{d}'=x^{d}-1\in\F_{q}[x]$, $\theta_{q}(f):= \phi_{q}(f)/q^{\deg(f)}$, $\mu_{q}$ and $\phi_{q}$ are the M\"obius and Euler functions in $\F_{q}[x]$, respectively, the first sum extends over the monic divisors of $F_d'$ in $\F_{q}[x]$ and the second sum runs through the additive characters of $\F_{q}$ of $q$-order $f$. The \emph{$q$-order} of some $\psi\in\widehat{\F_{q^{d}}}$, denoted by $\ord(\psi)$, is defined as the lowest degree monic polynomial $g\in\F_{q}[x]$ such that $\psi( g\circ a ) = 1$ for all $a\in\F_{q^{d}}$. We note that the $q$-order of an additive character of $\F_{q^{d}}$ divides $F_d'$ and that $\F_{q^d}$ contains $\phi_q(F_d')$ elements that are $q^d/q$-normal.

%

Last but not least, it follows immediately from the orthogonality relations that the characteristic function for some $a\in\F_q$ to be nonzero is
\[ z(a) = \frac 1q \sum_{\eta\in\widehat{\F_q}}(1- \eta(a)) . \]
\section{A construction of completely normal elements} \label{sec:construction}
Before we proceed, we start with the following definition.
\begin{definition}
Take some finite field extension $\F_{q^n}/\F_q$ and some $a\in\F_{q^n}$. The \emph{set of translates} of $a$ is defined as 
\[ \mathcal T_a = \{ a + c : c\in\F_q\} . \]
\end{definition}
In this section we will modify the following proposition, that plays an important role in the original proof of Theorem~\ref{thm:cnbt}.
\begin{proposition}[\cite{hachenberger94}*{Claim~(3.1.4)}] \label{propo:hb2}
Assume that $n=n_1n_2$ where $n_1, n_2$ are relatively prime. If $a_i\in\F_{q^{n_i}}$ is $q^{n_i}/q$-completely normal for $i=1,2$, then $a_1a_2$ is  $q^n/q$-completely normal.
\end{proposition}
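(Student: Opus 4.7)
The plan is to verify that for each $d\mid n$ the $\F_{q^d}$-conjugates of $a_1 a_2$ form an $\F_{q^d}$-basis of $\F_{q^n}$. Using $\gcd(n_1,n_2)=1$, the divisor $d$ factors uniquely as $d=d_1 d_2$ with $d_i\mid n_i$ and $\gcd(d_1,d_2)=1$; set $m_i:=n_i/d_i$, so that $m_1 m_2=n/d$ and $\gcd(m_1,m_2)=1$.

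Let $\sigma\colon x\mapsto x^{q^d}$ generate $\Gal(\F_{q^n}/\F_{q^d})$. Its restriction to $\F_{q^{n_1}}$ is the $d_2$-th power of the Frobenius $x\mapsto x^{q^{d_1}}$ of $\F_{q^{n_1}}/\F_{q^{d_1}}$; since $\gcd(d_2,n_1)=1$ this restriction still has order $m_1$ and generates $\Gal(\F_{q^{n_1}}/\F_{q^{d_1}})$, and symmetrically on $\F_{q^{n_2}}$. I would then apply the Chinese remainder theorem to index each $k\in\{0,\dots,m_1 m_2-1\}$ by the pair $(i,j)$ with $i\equiv k\pmod{m_1}$ and $j\equiv k\pmod{m_2}$; the conjugate $(a_1 a_2)^{q^{dk}}=\sigma^k(a_1)\sigma^k(a_2)$ then becomes $u_i v_j$ for some permutation $\{u_i\}$ of the $\F_{q^{d_1}}$-conjugates of $a_1$ and some permutation $\{v_j\}$ of the $\F_{q^{d_2}}$-conjugates of $a_2$. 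Since $a_i$ is $q^{n_i}/q^{d_i}$-normal by complete normality, $\{u_i\}$ is an $\F_{q^{d_1}}$-basis of $\F_{q^{n_1}}$ and $\{v_j\}$ is an $\F_{q^{d_2}}$-basis of $\F_{q^{n_2}}$.

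The main, and I expect the only genuine, obstacle is to upgrade these two bases to the conclusion that $\{u_i v_j\}$ is an $\F_{q^d}$-basis of $\F_{q^n}$. As $m_1 m_2=[\F_{q^n}:\F_{q^d}]$, only linear independence over $\F_{q^d}$ has to be checked. I plan to invoke linear disjointness: coprimality of degrees yields $\F_q$-algebra isomorphisms $\F_{q^{n_1}}\otimes_{\F_q}\F_{q^{n_2}}\cong\F_{q^n}$ and $\F_{q^{d_1}}\otimes_{\F_q}\F_{q^{d_2}}\cong\F_{q^d}$ induced by multiplication, the first of which is an isomorphism of modules over the second. The elementary tensors $u_i\otimes v_j$ form a free basis of the left module and transport exactly to the products $u_i v_j$ on the right, delivering the required independence and completing the argument.
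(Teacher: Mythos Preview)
Your argument is correct. The paper does not prove Proposition~\ref{propo:hb2} itself but cites it from \cite{hachenberger94}; the remark following Proposition~\ref{propo:a1a2-1} makes clear that the intended proof is the one given there with Lemma~\ref{lemma:hb2} in place of Lemma~\ref{lemma:a1a2-1}: factor $d=d_1d_2$ with $d_i\mid n_i$, use complete normality together with Lemma~\ref{lemma:hb1} to make each $a_i$ normal over $\F_{q^d}$ in the appropriate intermediate extension, and then apply Lemma~\ref{lemma:hb2} over $\F_{q^d}$. Your CRT indexing of the conjugates and your tensor-product/linear-disjointness step are precisely the contents of Lemmas~\ref{lemma:hb1} and~\ref{lemma:hb2} unwound from first principles, so the two proofs coincide---yours is the self-contained rendering of the paper's modular argument.
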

Towards this end, we will need the following auxiliary lemmas and the notion of the trace function. The \emph{$q^n/q$-trace} is the function
\[ \Tr_{q^n/q} : \F_{q^n}\to\F_q , \ a \mapsto \sum_{i=0}^{n-1} a^{q^i} = \sum_{\sigma\in\Gal(\F_{q^n}/\F_q)} \sigma(a) . \]
\begin{lemma}[\cite{hachenberger94}*{Claim~(3.1.2)}] \label{lemma:hb1}
Let $n_1,n_2$ be relatively prime and assume that some $a\in\F_{q^{n_1}}$ is $q^{n_1}/q$-normal. Then $a$ is $q^{n_1n_2}/q^{n_2}$-normal (as an element of $\F_{q^{n_1n_2}}$).
\end{lemma}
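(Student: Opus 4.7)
The plan is to reduce the $q^{n_1n_2}/q^{n_2}$-normality of $a$ to its $q^{n_1}/q$-normality, by exploiting the fact that when $\gcd(n_1,n_2)=1$ the compositum $\F_{q^{n_1}}\cdot \F_{q^{n_2}}$ equals $\F_{q^{n_1n_2}}$ and the intersection is $\F_q$, so extension of scalars is free. Concretely, I want to show that the set
\[ S = \{a, a^{q^{n_2}}, a^{q^{2n_2}}, \ldots, a^{q^{(n_1-1)n_2}}\} \]
is an $\F_{q^{n_2}}$-basis of $\F_{q^{n_1n_2}}$. Since $|S|=n_1=[\F_{q^{n_1n_2}}:\F_{q^{n_2}}]$, it suffices to show $S$ spans $\F_{q^{n_1n_2}}$ over $\F_{q^{n_2}}$, or equivalently is $\F_{q^{n_2}}$-linearly independent.

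First I would observe that the automorphism $\tau:x\mapsto x^{q^{n_2}}$ restricted to $\F_{q^{n_1}}$ is $\sigma^{n_2}$, where $\sigma$ is the $q$-Frobenius generating $\Gal(\F_{q^{n_1}}/\F_q)$. Because $\gcd(n_1,n_2)=1$, the integer $n_2$ is a unit modulo $n_1$, hence $\sigma^{n_2}$ generates the same cyclic group of order $n_1$. Therefore, as a subset of $\F_{q^{n_1}}$,
\[ S = \{\sigma^{in_2}(a):0\le i<n_1\} = \{\sigma^j(a):0\le j<n_1\}, \]
which by hypothesis is an $\F_q$-basis of $\F_{q^{n_1}}$.

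Next I would upgrade this $\F_q$-basis to an $\F_{q^{n_2}}$-basis of the compositum. Since $\gcd(n_1,n_2)=1$ we have $\F_{q^{n_1}}\cap\F_{q^{n_2}}=\F_q$ and $\F_{q^{n_1}}\cdot\F_{q^{n_2}}=\F_{q^{n_1n_2}}$. Any $\F_q$-spanning set for $\F_{q^{n_1}}$ therefore $\F_{q^{n_2}}$-spans the compositum $\F_{q^{n_1n_2}}$; since $S$ has exactly $n_1=[\F_{q^{n_1n_2}}:\F_{q^{n_2}}]$ elements, it is an $\F_{q^{n_2}}$-basis, which is precisely $q^{n_1n_2}/q^{n_2}$-normality of $a$.

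The only delicate point is the linear-disjointness step converting the $\F_q$-basis of $\F_{q^{n_1}}$ to an $\F_{q^{n_2}}$-basis of the compositum; if one prefers to avoid invoking compositum theory, it can be done by hand: assume $\sum_{i=0}^{n_1-1}c_i\,\sigma^{in_2}(a)=0$ with $c_i\in\F_{q^{n_2}}$, pick an $\F_q$-basis $\{e_1,\ldots,e_{n_2}\}$ of $\F_{q^{n_2}}$, expand $c_i=\sum_j c_{ij}e_j$ with $c_{ij}\in\F_q$, and use that $\{e_j\}$ is $\F_{q^{n_1}}$-linearly independent (again because $\F_{q^{n_1}}\cap\F_{q^{n_2}}=\F_q$) to conclude $\sum_i c_{ij}\sigma^{in_2}(a)=0$ for each $j$, whence all $c_{ij}=0$ by the $q^{n_1}/q$-normality of $a$. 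Either route gives the claim.
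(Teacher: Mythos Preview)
Your argument is correct. The paper does not supply a proof of this lemma; it simply quotes it from Hachenberger's 1994 article (Claim~(3.1.2) there), so there is no in-paper proof to compare against. Your route---observing that $x\mapsto x^{q^{n_2}}$ restricted to $\F_{q^{n_1}}$ is a generator of $\Gal(\F_{q^{n_1}}/\F_q)$ because $\gcd(n_1,n_2)=1$, so that the conjugate set $S$ coincides with the given $\F_q$-normal basis, and then upgrading via linear disjointness of $\F_{q^{n_1}}$ and $\F_{q^{n_2}}$ over $\F_q$---is exactly the standard proof and essentially what Hachenberger does.

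One small remark on your ``by hand'' alternative: when you assert that an $\F_q$-basis $\{e_j\}$ of $\F_{q^{n_2}}$ is $\F_{q^{n_1}}$-linearly independent ``because $\F_{q^{n_1}}\cap\F_{q^{n_2}}=\F_q$'', that is precisely the linear-disjointness statement in the other direction, so it is not quite more elementary than what you are trying to avoid. The clean justification in this finite-field setting is the degree count $[\F_{q^{n_1n_2}}:\F_q]=n_1n_2=[\F_{q^{n_1}}:\F_q]\cdot[\F_{q^{n_2}}:\F_q]$, which immediately gives that any $\F_q$-basis of $\F_{q^{n_1}}$ together with any $\F_q$-basis of $\F_{q^{n_2}}$ multiply to an $\F_q$-basis of $\F_{q^{n_1n_2}}$; both directions of linear disjointness then follow at once.
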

\begin{lemma}[\cite{hachenberger94}*{Claim~(3.1.3)}] \label{lemma:hb2}
Assume that $n=n_1n_2$ where $n_1, n_2$ are relatively prime. If $a_i\in\F_{q^{n_i}}$ is $\F_{q^{n_i}}/\F_q$-normal for $i=1,2$, then $a_1a_2$ is  $q^n/q$-normal.
\end{lemma}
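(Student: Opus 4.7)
The plan is to compute the Galois conjugates of $a_1a_2$ explicitly and then deduce their $\F_q$-linear independence by working over the intermediate field $\F_{q^{n_1}}$ and invoking Lemma~\ref{lemma:hb1}.

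First I would identify the conjugates. Since $\gcd(n_1,n_2)=1$, the Chinese Remainder Theorem gives a bijection between $\{0,1,\ldots,n-1\}$ and $\{0,\ldots,n_1-1\}\times\{0,\ldots,n_2-1\}$ via $k\mapsto (k\bmod n_1,\,k\bmod n_2)$. Writing $i=k\bmod n_1$ and $j=k\bmod n_2$, the fact that $a_1\in\F_{q^{n_1}}$ and $a_2\in\F_{q^{n_2}}$ yields $(a_1a_2)^{q^k}=a_1^{q^i}a_2^{q^j}$. Hence the $\F_q$-conjugates of $a_1a_2$ are precisely the $n$ products $\{a_1^{q^i}a_2^{q^j}:0\leq i<n_1,\,0\leq j<n_2\}$, and it suffices to show that these are $\F_q$-linearly independent.

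To do so, I would assume a relation $\sum_{i,j}c_{ij}a_1^{q^i}a_2^{q^j}=0$ with $c_{ij}\in\F_q$ and regroup it as $\sum_j b_j a_2^{q^j}=0$, where $b_j:=\sum_i c_{ij}a_1^{q^i}\in\F_{q^{n_1}}$. The crux is then to verify that $\{a_2^{q^j}:0\leq j<n_2\}$ is an $\F_{q^{n_1}}$-basis of $\F_{q^n}$. By Lemma~\ref{lemma:hb1}, $a_2$ is $q^n/q^{n_1}$-normal, so $\{a_2^{q^{jn_1}}:0\leq j<n_2\}$ is such a basis; since $\gcd(n_1,n_2)=1$ the map $j\mapsto jn_1\bmod n_2$ permutes $\{0,\ldots,n_2-1\}$, and $a_2\in\F_{q^{n_2}}$ gives $a_2^{q^{jn_1}}=a_2^{q^{jn_1\bmod n_2}}$, so the two sets coincide. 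Consequently $b_j=0$ for every $j$, and then the $q^{n_1}/q$-normality of $a_1$ forces every $c_{ij}=0$.

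The only step that requires any care is the reindexing that matches the canonical $\F_{q^{n_1}}$-normal basis for $\F_{q^n}$ supplied by Lemma~\ref{lemma:hb1} with the naive set of $q$-power conjugates of $a_2$ indexed from $0$ to $n_2-1$; once this permutation is in hand, the two-stage linear independence argument is routine.
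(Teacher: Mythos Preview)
The paper does not supply its own proof of this lemma; it merely cites Hachenberger's original argument (Claim~(3.1.3) in \cite{hachenberger94}) without reproducing it. So there is no in-paper proof to compare against.

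Your argument is correct. The identification of the conjugate set via the Chinese Remainder Theorem, the invocation of Lemma~\ref{lemma:hb1} to promote $a_2$ to a $q^n/q^{n_1}$-normal element, and the reindexing observation that $\{a_2^{q^{jn_1}}:0\le j<n_2\}=\{a_2^{q^j}:0\le j<n_2\}$ all go through cleanly; the concluding two-stage linear-independence step is then immediate. This is essentially the classical tensor-basis argument (the conjugates factor as $\{a_1^{q^i}\}\otimes\{a_2^{q^j}\}$), which is indeed how Hachenberger's original proof proceeds.
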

\begin{lemma}[\cite{mazumderkapetanakisbasnet25}*{Lemma~3.1}] \label{lemma:normal_trace}
Suppose $a\in\F_{q^n}$ is $q^n/q$-normal and $d\mid n$. Then $\Tr_{q^n/q^d}(a)$ is $q^d/q$-normal. In particular, $\Tr_{q^n/q^d}(a)\neq 0$.
\end{lemma}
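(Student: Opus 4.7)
The plan is to prove the lemma directly by contradiction, using only the defining property of $q^n/q$-normality of $a$ together with the explicit formula
\[ \Tr_{q^n/q^d}(a) = \sum_{i=0}^{n/d-1} a^{q^{di}} . \]
Set $b := \Tr_{q^n/q^d}(a)$, which lies in $\F_{q^d}$. The goal is to show that its $\F_q$-conjugates $b, b^q, \ldots, b^{q^{d-1}}$ are linearly independent over $\F_q$.

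Suppose, for the sake of contradiction, that there exist $c_0, c_1, \ldots, c_{d-1}\in\F_q$, not all zero, with
\[ \sum_{j=0}^{d-1} c_j b^{q^j} = 0 . \]
Since the Frobenius commutes with the trace, $b^{q^j} = \sum_{i=0}^{n/d-1} a^{q^{di+j}}$, and substituting yields
\[ \sum_{j=0}^{d-1}\sum_{i=0}^{n/d-1} c_j\, a^{q^{di+j}} = 0 . \]
As $k = di+j$ (with $0\le j < d$ and $0 \le i < n/d$) runs bijectively over $\{0,1,\ldots,n-1\}$, this rewrites as $\sum_{k=0}^{n-1} c_{k \bmod d}\, a^{q^k} = 0$, i.e.\ a non-trivial $\F_q$-linear relation among the $q^n/q$-conjugates $a, a^q, \ldots, a^{q^{n-1}}$ of $a$. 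This contradicts the hypothesis that $a$ is $q^n/q$-normal, completing the proof that $b$ is $q^d/q$-normal. The ``in particular'' statement is immediate: if $b=0$, then the entire system $\{b^{q^j}\}_{j=0}^{d-1}$ vanishes, which is certainly linearly dependent.

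The argument is essentially bookkeeping, and I do not anticipate a real obstacle. The only point that needs a moment of care is the re-indexing $k = di + j$, which must be verified to be a bijection between the product of index ranges and $\{0,\ldots,n-1\}$ so that the coefficient $c_{k\bmod d}$ in the resulting combination is truly inherited from a non-zero $c_j$; this is immediate from the Euclidean division of $k$ by $d$. No characterization of normal elements beyond the defining linear independence of their $\F_q$-conjugates is needed, so the lemma follows at once.
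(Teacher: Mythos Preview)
Your proof is correct. Note, however, that the paper does not supply its own proof of this lemma: it is quoted verbatim as \cite{mazumderkapetanakisbasnet25}*{Lemma~3.1} and used as a black box, so there is nothing in the present paper to compare your argument against. Your direct linear-algebra argument via the re-indexing $k=di+j$ is the standard elementary route and is entirely self-contained.
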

\begin{lemma} \label{lemma:normal_mapping}
Suppose $a\in\F_{q^n}$ is $q^n/q$-normal. Choose some $\lambda_i\in\F_q$ for $i=0,\ldots ,n-1$. Then $\sum_{i=0}^{n-1} \lambda_i a^{q^i} \in\F_q$ if and only if all of the $\lambda_i$'s are equal.
\end{lemma}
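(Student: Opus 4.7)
The plan is to exploit the defining equivalence $b\in\F_q \Leftrightarrow b^q=b$, translating the condition on $b=\sum_{i=0}^{n-1}\lambda_i a^{q^i}$ into a linear relation among the conjugates of $a$, which the normality hypothesis can then collapse to a condition on the $\lambda_i$'s.

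The easy direction comes first. If all $\lambda_i$ are equal to some common $\lambda\in\F_q$, then $\sum_{i=0}^{n-1}\lambda_i a^{q^i}=\lambda\sum_{i=0}^{n-1}a^{q^i}=\lambda\,\Tr_{q^n/q}(a)\in\F_q$, and this direction requires no further work (by Lemma~\ref{lemma:normal_trace} the trace is in fact nonzero, but that is not needed here).

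For the converse, I would set $b=\sum_{i=0}^{n-1}\lambda_i a^{q^i}$ and compute $b^q$. Since each $\lambda_i\in\F_q$ satisfies $\lambda_i^q=\lambda_i$, Frobenius gives $b^q=\sum_{i=0}^{n-1}\lambda_i a^{q^{i+1}}$, and after reindexing modulo $n$ (using $a^{q^n}=a$) this becomes $b^q=\sum_{i=0}^{n-1}\lambda_{i-1}a^{q^i}$, with subscripts read mod~$n$. Assuming $b\in\F_q$ forces $b^q=b$, hence
\[ \sum_{i=0}^{n-1}(\lambda_i-\lambda_{i-1})\,a^{q^i}=0. \]
Since $a$ is $q^n/q$-normal, the conjugates $a,a^q,\ldots,a^{q^{n-1}}$ are $\F_q$-linearly independent, so every coefficient must vanish: $\lambda_i=\lambda_{i-1}$ for all $i\in\{0,\ldots,n-1\}$ (indices mod $n$). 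Chaining these equalities around the cycle yields $\lambda_0=\lambda_1=\cdots=\lambda_{n-1}$, as required.

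There is no real obstacle here; the lemma is essentially just the observation that the Frobenius action on $\F_{q^n}$, expressed in the normal basis $\{a^{q^i}\}$, is a cyclic shift of coordinates, so its fixed subspace is the line of constant vectors. The only minor care needed is the cyclic reindexing $a^{q^n}=a$ and the remark that $\lambda_i^q=\lambda_i$ because the $\lambda_i$ lie in $\F_q$.
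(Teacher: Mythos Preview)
Your proof is correct, but it proceeds differently from the paper's. The paper argues by cardinality: since $a$ is normal, the map $(\lambda_0,\ldots,\lambda_{n-1})\mapsto\sum_i\lambda_i a^{q^i}$ is a bijection $\F_q^n\to\F_{q^n}$; the $q$ constant tuples all land in $\F_q$ (the ``easy'' direction), and by injectivity their images are $q$ distinct elements, hence all of $\F_q$, so no non-constant tuple can hit $\F_q$. Your route instead reads $b\in\F_q$ as $b^q=b$, computes that Frobenius acts as a cyclic shift on the coordinate vector $(\lambda_0,\ldots,\lambda_{n-1})$, and invokes linear independence of the conjugates to force $\lambda_i=\lambda_{i-1}$ for all $i$. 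Your argument is more explicit and would work verbatim over any field with a cyclic Galois group (it does not use finiteness of $\F_q$), whereas the paper's counting argument is shorter but leans on $|\F_q|=q$. Either way the content is the same observation you name at the end: in a normal basis, Frobenius is a cyclic shift, and its fixed vectors are the constant ones.
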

\begin{proof}
Since $a$ is $q^n/q$-normal, the map
\[ \F_q^n \to \F_{q^n} ,\, (\lambda_0,\ldots ,\lambda_{n-1}) \mapsto \sum_{i=0}^{n-1} \lambda_i a^{q^i} \]
is a bijection. Thus, it suffices to prove the inverse claim. For the inverse, observe that, if $\lambda_i=\lambda$ for all $i=0,\ldots ,n-1$, then
\[ \sum_{i=0}^{n-1} \lambda_i a^{q^i} = \lambda \Tr_{q^n/q}(a)\in\F_q , \]
while Lemma~\ref{lemma:normal_trace} yields that $\Tr_{q^n/q}(a)\neq 0$.
\end{proof}
%
%

We prove the following lemma.
\begin{lemma} \label{lemma:x+1}
Suppose $a\in\F_{q^n}$ is $q^n/q$-normal and take some $c\in\F_q$. If $p\mid n$, then $a+c$ is $q^n/q$-normal. If $p\nmid n$, then the following are equivalent:
\begin{enumerate}
\item $a+c$ is $q^n/q$-normal.
\item $\Tr_{q^n/q}(a+c)\neq 0$.
\item $c \neq -\Tr_{q^n/q}(a)\cdot n^{-1}$, where $n$ is seen as an element of $\F_q$.
\end{enumerate}
\end{lemma}
\begin{proof}
Take
\[ \sum_{i=0}^{n-1} \lambda_i (a+c)^{q^i} = 0 , \]
for some $\lambda_i\in\F_q$, for $i=0,\ldots ,n-1$. Recall that $a+c$ is $q^n/q$-normal if and only if the above holds only for $\lambda_i=0$ for all $i=0,\ldots ,n-1$. Now, the latter is equivalent to
\[ \sum_{i=0}^{n-1} \lambda_i a^{q^i}  =- c \sum_{i=0}^{n-1} \lambda_i . \]
Further, Lemma~\ref{lemma:normal_mapping} entails that $\lambda_i=\lambda$ for every $i=0,\ldots ,n-1$, for some $\lambda \in\F_q$. Finally, the last equation becomes
\[ \lambda \left( \Tr_{q^n/q}(a) +cn\right) = 0 , \]
which implies the desired result, with Lemma~\ref{lemma:normal_trace} in mind.
\end{proof}
%
%
\begin{proposition} \label{propo:a1a2-1}
Assume that $n=n_1n_2$ where $n_1$ and $n_2$ are relatively prime. If $a_i\in\F_{q^{n_i}}$ is $q^{n_i}/q$-completely normal
for $i=1,2$.
If $p\mid n$, then $a_1a_2+c$ is  $q^n/q$-completely normal for every $c\in\F_q$. If $p\nmid n$, then, for every $c\in\F_q$, the following are equivalent:
\begin{enumerate}
\item $a_1a_2+c$ is $q^n/q$-completely normal.
\item $\Tr_{q^n/q}(a_1a_2+c)\neq 0$.
\item $c \neq -\Tr_{q^n/q}(a_1a_2)\cdot n^{-1}$, where $n$ is seen as an element of $\F_q$.
\end{enumerate}
\end{proposition}
\begin{proof}
Recall that $a_1a_2$ is $q^n/q$-completely normal by Proposition~\ref{propo:hb2}. Next, fix some $c\in\F_q$. Clearly, if $c=-n\Tr_{q^n/q}(a_1a_2)$ (in the case $p\nmid n$), then, by Lemma~\ref{lemma:x+1}, $a_1a_2+c$ cannot be $q^n/q$-completely normal. So, from now on, we assume that either $p\mid n$, or $\Tr_{q^n/q}(a_1a_2+c)\neq 0$ if $p\nmid n$.
 
Now, take some $d\mid n$. If $p\mid (n/d)$, Lemma~\ref{lemma:x+1} yields that $a_1a_2+c$ is $q^n/q^d$-normal. So, we may focus on the case $p\nmid (n/d)$.

In this case, Lemma~\ref{lemma:x+1} implies that $a_1a_2+c$ is $q^n/q^d$-normal if $\Tr_{q^n/q^d}(a_1a_2+c)\neq 0$. However, if this is not the case, then
\[ \Tr_{q^n/q}(a_1a_2+c) = \Tr_{q^d/q}\left( \Tr_{q^n/q^d}(a_1a_2+c) \right) = \Tr_{q^d/q}(0) = 0, \]
a contradiction.
%
\end{proof}
%
%
\begin{remark}
The proof of Proposition~\ref{propo:a1a2-1} uses essentially the same arguments with that of Proposition~\ref{propo:hb2}, only with Lemma~\ref{lemma:x+1} in mind, instead of Lemma~\ref{lemma:hb2}.
\end{remark}
\begin{remark} \label{remark:non_primitive_theta}
For the purposes of this work, we are interested in finding $q^n/q$-completely normal elements, that have multiplicative order $q^n-1$. However, the element $a_1a_2$ that is obtained from Lemma~\ref{lemma:hb2} has order at most $(q^{n_1}-1)(q^{n_2}-1)/(q-1)$. In other words the construction of completely normal elements in the original proof of Theorem~\ref{thm:cnbt} never yields primitive elements. In our opinion this is a key factor for the persistence of Conjecture~\ref{conj:mm}.
\end{remark}
As a corollary of Proposition~\ref{propo:a1a2-1}, we obtain the main result of this section.
\begin{theorem} \label{thm:completely_normal_translates}
Let $\F_{q^n}/\F_q$ be a finite field extension and let $n=p_1^{n_1}\cdots p_k^{n_k}$ be the prime decomposition of $n$, with $k\geq 2$. Further, take some $q^{p_i^{n_i}}/q$-completely normal $a_i$, for every $i=1,\ldots ,k$ and set $a = a_1\cdots a_k$. 
\begin{enumerate}
\item If $p\mid n$, the set of translates $\mathcal T_a$ is comprised of $q^n/q$-completely normal elements.
\item If $p\nmid n$, then the set $\mathcal T_a\setminus\{ a -Tr_{q^n/q}(a)\cdot n^{-1}\}$ (where $n$ is seen as an element of $\F_q$), is comprised of $q^n/q$-completely normal elements
\end{enumerate}
\end{theorem}
\begin{proof}
Proposition~\ref{propo:hb2}, used inductively, ensures that $a_1\cdots a_{k-1}$ is $q^{p_1^{n_1}\cdots p_{k-1}^{n_{k-1}}}/q$-completely normal. Now, apply Proposition~\ref{propo:a1a2-1}.
\end{proof}
%
\begin{remark} \label{remark:tp}
Another way to attack Conjecture~\ref{conj:mm} emerges from Theorem~\ref{thm:completely_normal_translates}. Namely, Theorem~\ref{thm:completely_normal_translates} links the MM property with the translate property (and its strong version). The corresponding definitions and the details of this interesting connection are explained briefly in Appendix~\ref{app:tp}. However, as explained in more detail in Appendix~\ref{app:tp}, this connection is of small practical interest for the purposes of this work.
\end{remark}
  \section{Main condition} \label{sec:main}
Before we proceed further, we introduce the following family of finite field extensions that will be of interest for us.
\begin{definition}
Let $\F_{q^n}/\F_q$ be an extension of finite fields, such that
\begin{enumerate}
  \item $n=n_1n_2$, where $n_1,n_2>1$ with $\gcd(n_1,n_2)=1$ and
  \item $\F_{q^{n_2}}/\F_q$ is completely basic.
\end{enumerate} 
Then we call the extension \emph{$(n_1,n_2)$-partially completely basic}.
\end{definition} 
\begin{remark}
With the discussion on completely basic extensions in mind, see Sect.~\ref{sec:previous}, we observe that the family of partially completely basic extensions is rather large. In particular, if $n=r_1^{n_1}\cdots r_k^{n_k}$ is the prime factorization of $n$, if the extension $\F_{q^n}/\F_q$ is not completely basic, it is partially completely basic if $n_i=1$ or $2$ for some $i$, if $r_i=p$ for some $i$, or, if $r_i^{n_i}\mid q^n-1$ for some $i$, while it is clear a fixed extension may be $(n_1,n_2)$-partially completely basic for multiple parameters $n_1$ and $n_2$.
\end{remark}
%
%
%
We will also need the following technical definition.
\begin{definition}
Let $X$ be a positive integer or a polynomial in $\F_q[x]$. Then $W(X)$ denotes the number of squarefree divisors or the number of squarefree monic divisors of $X$, respectively.
\end{definition}
%
We are now in position to prove the main results of this section.
\begin{theorem} \label{thm:main_condition}
  Let $\F_{q^n}/\F_q$ be an $(n_1,n_2)$-partially completely basic extension, such that $p\nmid n$ and
%
  \[
  \frac{q-2}{q-1}\cdot q^{n_2/2} \geq 2 n_1 W(q')W(F_{n_2}') .
  \]
  Then $\F_{q^n}/\F_q$ possesses the MM property.
\end{theorem}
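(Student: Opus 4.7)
The plan is to apply the character sum method. Fix a $q^{n_1}/q$-completely normal element $a_1 \in \F_{q^{n_1}}$ via Theorem~\ref{thm:cnbt}; since $n_1 > 1$, $a_1 \notin \F_q$, whence $\F_q(a_1) = \F_{q^{n_1}}$. Set
\[ N := \sum_{b \in \F_{q^{n_2}}^*} \sum_{c \in \F_q^*} \varOmega_{n_2}(b)\, \omega(a_1 b + c). \]
Because $\F_{q^{n_2}}/\F_q$ is completely basic, every $q^{n_2}/q$-normal $b$ is automatically $q^{n_2}/q$-completely normal, so Proposition~\ref{propo:a1a2-1} (applied with $a_1$ completely normal over $\F_{q^{n_1}}/\F_q$) ensures that each contributing pair $(b,c)$ yields a primitive and $q^n/q$-completely normal element $a_1 b + c$. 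Hence the target reduces to showing $N > 0$.

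Expanding $\omega$ and $\varOmega_{n_2}$ via the Vinogradov-type formulas of Section~\ref{sec:prelim} gives
\[ N = \theta(q')\theta_q(F_{n_2}') \sum_{d \mid q', \, f \mid F_{n_2}'} \frac{\mu(d)\mu_q(f)}{\phi(d)\phi_q(f)} \sum_{\substack{\ord\chi = d \\ \ord\psi = f}} S(\chi,\psi), \]
with $S(\chi,\psi) := \sum_{b,c} \chi(a_1 b + c)\psi(b)$, where $\chi$ ranges over multiplicative characters of $\F_{q^n}^*$ and $\psi$ over additive characters of $\F_{q^{n_2}}$. The trivial pair $(\chi_0, \psi_0)$ contributes the main term $\theta(q')\theta_q(F_{n_2}')(q^{n_2}-1)(q-1)$; one uses that $a_1 b + c$ is never zero on the summation domain, which follows from $a_1 \notin \F_q$ together with $\F_{q^{n_1}} \cap \F_{q^{n_2}} = \F_q$.

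For every non-trivial pair with $\chi \neq \chi_0$, the key manipulation is the multiplicative factorization $\chi(a_1 b + c) = \chi(a_1)\,\chi(b + c/a_1)$. Since $c \in \F_q^*$, we have $c/a_1 \in \F_{q^{n_1}} \setminus \F_q$ and hence $\F_{q^{n_2}}(c/a_1) = \F_{q^n}$ by the coprimality of $n_1, n_2$. Theorem~\ref{thm:char-sum}, applied to the extension $\F_{q^n}/\F_{q^{n_2}}$ (so that $q$ is replaced by $q^{n_2}$ and $n$ by $n_1$), then yields
\[ \left| \sum_{b \in \F_{q^{n_2}}} \chi(b + c/a_1)\,\psi(b) \right| \leq n_1 q^{n_2/2}; \]
after subtracting the $b = 0$ contribution and summing over $c \in \F_q^*$, we obtain $|S(\chi,\psi)| \leq (q-1)(n_1 q^{n_2/2} + 1)$. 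The residual case $\chi = \chi_0, \psi \neq \psi_0$ is evaluated directly and contributes only a negligible $-(q-1)$ per character. Collecting everything, the total error is of order $q \cdot n_1 q^{n_2/2} W(q^n-1) W(x^{n_2}-1)$, and comparison with the main term of order $q^{n_2+1}$ shows that $N > 0$ precisely under the stated hypothesis $q^{n_2/2} > n_1 W(q^n-1)W(x^{n_2}-1)$. The principal technical subtlety is the restriction $c \in \F_q^*$: excluding $c = 0$ keeps $c/a_1$ outside $\F_{q^{n_2}}$ so that the generator hypothesis of Theorem~\ref{thm:char-sum} is uniformly satisfied; had $c = 0$ been retained, the multiplicative characters that are trivial on $\F_{q^{n_2}}^*$ would generate a spurious contribution of size $q^{n_2}$ per character that would overwhelm the error budget implied by the stated inequality.
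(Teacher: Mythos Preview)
Your argument is essentially the paper's: fix a completely normal $a_1 \in \F_{q^{n_1}}$, expand $\sum_b \omega(a_1 b + c)\,\varOmega_{n_2}(b)$ via characters, and bound each non-principal term with Theorem~\ref{thm:char-sum} applied to $\F_{q^n}/\F_{q^{n_2}}$. The paper, however, is more economical on two counts. First, it simply fixes $c = 1$ rather than summing over $c \in \F_q^*$; your extra sum contributes a factor $q-1$ to both main and error terms that cancels, so the ``principal technical subtlety'' you describe is self-inflicted and disappears once $c$ is pinned down. Second, it sums over all $b \in \F_{q^{n_2}}$ (not just $\F_{q^{n_2}}^*$) and applies Theorem~\ref{thm:char-sum} uniformly to every nontrivial pair $(\chi,\psi)$, giving main term exactly $q^{n_2}$ and error $\leq n_1 q^{n_2/2}W(q^n-1)W(x^{n_2}-1)$ with no residual $+1$ terms and no separate treatment of $(\chi_0,\psi)$.

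One minor slip: the implication ``$a_1 \notin \F_q$, whence $\F_q(a_1) = \F_{q^{n_1}}$'' is false when $n_1$ is composite. The correct justification --- which the paper invokes via Lemma~\ref{lemma:hb1} --- is that normality of $a_1$ over $\F_q$ already forces $a_1$ (hence $a_1^{-1}$) to generate $\F_{q^n}$ over $\F_{q^{n_2}}$, so the hypothesis of Theorem~\ref{thm:char-sum} is met.
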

\begin{proof}
Theorem~\ref{thm:cnbt} ensures the existence of some $a\in\F_{q^{n_1}}$ that is $q^{n_1}/q$-completely normal. The fact that $\F_{q^{n_2}}/\F_q$ is completely basic and Proposition~\ref{propo:a1a2-1} imply that, if we identify some $b\in\F_{q^{n_2}}$ and $c\in\F_q^*$ such that
\begin{enumerate}
  \item $b$ is $q^{n_2}/q$-normal,
  \item $ab+c$ is primitive, and
  \item $\Tr_{q^n/q}(ab+c)\neq 0$,
\end{enumerate}
then $ab+c$ is primitive and $q^n/q$-completely normal. In particular, $\F_{q^n}/\F_q$ possesses the MM property.

%
From the characteristic functions of the corresponding properties, as presented in Sect.~\ref{sec:prelim}, it follows that the number of $b\in\F_{q^{n_2}}$ that combine the desired properties as above is
\[ \mathcal N = \sum_{b\in\F_{q^{n_2}}}\sum_{c\in\F_q^*} \omega (ab+c) \varOmega_{n_2}(b) z\left( \Tr_{q^n/q}(ab+c) \right) . \]
So, for our purposes, it suffices to show that $\mathcal N\neq 0$. We replace $\omega$ and $\varOmega_{n_2}$ with their expressions from Sect.~\ref{sec:prelim} and obtain:
\begin{align}
  \frac{\mathcal N}{\theta(q')} & = \sum_{d\mid q'} \frac{\mu(d)}{\phi(d)} \sum_{\substack{\chi\in\widehat{\F_{q^n}^*} \\ \ord(\chi) = d}} \sum_{b\in\F_{q^{n_2}}} \sum_{c\in\F_q^*} \chi(ab+c) \varOmega_{n_2}(b) z\left( \Tr_{q^n/q}(ab+c) \right) \nonumber \\ & = S_1 + S_2 , \label{eq:N=S1+S2}
\end{align}
where $S_1$ is the part that corresponds to $d=1$ and $S_2$ the one that corresponds to $d\neq 1$.

First we focus on $S_1$. We have that
\begin{equation} \label{eq:S1}
S_1 = \sum_{b\in\F_{q^{n_2}}} \varOmega_{n_2}(b) \sum_{c\in\F_q^*} z\left( \Tr_{q^n/q}(ab+c) \right)
 =  (q-2) \phi_q(F_{n_2}') . 
  \end{equation}

Then, we turn our attention to $S_2$. We have that
\[ S_2 = \frac{\theta_q(F_{n_2}')}{q} \sum_{\substack{d\mid q' \\ d\neq 1}} \sum_{f\mid F_{n_2}'} \frac{\mu(d)\mu_q(f)}{\phi(d)\phi_q(f)} \sum_{\substack{\chi\in\widehat{\F_{q^n}^*} \\ \ord(\chi) = d}} \sum_{\substack{\psi\in\widehat{\F_{q^{n_2}}} \\ \ord(\psi)=f}} \sum_{\eta\in\widehat{\F_q}} \sum_{c\in\F_q^*} \mathcal C(\chi,\psi,\eta,c) , \]
where
\[ \mathcal{C}(\chi,\psi,\eta,c) := \sum_{b\in\F_{q^{n_2}}} \chi(ab+c)\psi(b)\left( 1-\eta(\Tr_{q^n/q}(ab+c)) \right) . \]
It follows that
\begin{multline*}
 |\mathcal{C}(\chi,\psi,\eta,c)| \leq \\ |\chi(a)|\cdot \left| \sum_{b\in\F_{q^{n_2}}} \chi(a^{-1}c+b)\psi(b) \right| + \left| \chi(a)\eta\left( \Tr_{q^n/q}(c)\right)\right| \cdot \left| \sum_{b\in\F_{q^{n_2}}} \chi(a^{-1}c+b)\rho(b) \right| ,
 \end{multline*}
where $\rho(b) = \psi(b)\eta(\Tr_{q^n/n}(ab))$ is an additive character of $\F_{q^{n_2}}$.
Further, observe that, since $a$ is $q^{n_1}/q$-normal, Lemma~\ref{lemma:hb1} entails that it is also $q^n/q^{n_2}$-normal, thus $\F_{q^{n_2}}(a) = \F_{q^n}$, i.e., $\F_{q^{n_2}}(a^{-1}c) = \F_{q^n}$. It follows from Theorem~\ref{thm:char-sum} (applied on the extension $\F_{q^n}/\F_{q^{n_2}}$) that, since $\chi$ is not trivial, $|\mathcal{C}(\chi,\psi,\eta,c)|\leq 2 n_1 q^{n_2/2}$.
Furthermore, recall that there are exactly $\phi(d)$ (resp. $\phi_q(f)$) multiplicative (resp. additive) characters of order $d$ (resp. $q$-order $f$).
It follows that
\begin{align}
 |S_2| & \leq \frac{\theta_q(F_{n_2}')}{q} (W(q')-1)W(F_{n_2}') q (q-1) 2n_1 q^{n_2/2}  \nonumber \\
  & < \frac{\phi_q(F_{n_2}')}{q^{n_2/2}} W(q')W(F_{n_2}') (q-1) 2n_1 \label{eq:S2}
  \end{align}

We combine Eqs.~\eqref{eq:N=S1+S2}, \eqref{eq:S1} and \eqref{eq:S2} and obtain that $\mathcal N\neq 0$ if
\[ (q-2) \phi_q(F_{n_2}') \geq \frac{\phi_q(F_{n_2}')}{q^{n_2/2}} W(q')W(F_{n_2}') (q-1) 2 n_1 , \]
from where the result follows.
\end{proof}
For the case $p\mid n$ we have the following improved version of Theorem~\ref{thm:main_condition}.
\begin{theorem} \label{thm:main_condition_2}
  Let $\F_{q^n}/\F_q$ be an $(n_1,n_2)$-partially completely basic extension, such that $p\mid n$. Then, if
%
  \[
  q^{n_2/2} \geq n_1 W(q')W(F_{n_2}') ,
  \]
  $\F_{q^n}/\F_q$ possesses the MM property.
\end{theorem}
\begin{proof}
If $p\mid n$, the requirement $\Tr_{q^n/q}(ab+c)\neq 0$ is satisfied for all $c\in\F_q^*$. This means that the term $q-2$ that appears in Eq.~\eqref{eq:S1} can be replaced by $q-1$ and that the function $z$ can be skipped altogether. All the other arguments remain identical as the ones in the proof of Theorem~\ref{thm:main_condition}
\end{proof}
\begin{remark} \label{remark:q=2}
Note that Theorem~\ref{thm:main_condition} is ineffective in the case $q=2$ as its condition can never hold. In fact, our method fails in the case $q=2$ and $n$ odd. The underlying reason is that, under the assumptions of Theorem~\ref{thm:completely_normal_translates}, the set where we are looking for primitive completely normal elements is the singleton $\{ab\}$, but $ab$, as already mentioned in Remark~\ref{remark:non_primitive_theta}, is never primitive. The case $q=2$ and $n$ even can be, however, treated by Theorem~\ref{thm:main_condition_2}.
\end{remark}
\begin{remark} \label{remark:sieves}
In the literature one can find several powerful strategies to weaken the conditions of Theorems~\ref{thm:main_condition} and \ref{thm:main_condition_2}, such as the prime sieve \cite{cohenhuczynska03}, the modified prime sieve \cite{baileycohensutherlandtrudgian19} or the hybrid bound \cite{bagger24}. However, in this work, we choose to not use any of these in an attempt to keep the text as simple as possible focus on the novel aspects of this work.
\end{remark}
\section{An asymptotic result} \label{sec:asymptotic}
In this section, we will use Theorems~\ref{thm:main_condition} and \ref{thm:main_condition_2}, in order to obtain an asymptotic condition for an $(n_1,n_2)$-partially completely basic extension to possess the MM property. We will need the following classic result.
\begin{lemma}[\cite{apostol76}*{p.~296}] \label{lemma:apostol}
For every $\delta > 0$, $W(n) = o(n^\delta)$, where $o$ signifies the little-o notation.
\end{lemma}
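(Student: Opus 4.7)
My plan is to reduce the claim to the bound $2^{\omega(n)} = o(n^\delta)$, where $\omega(n)$ is the number of distinct prime divisors of $n$. This reduction is immediate: every squarefree divisor of $n$ corresponds to a subset of its prime divisors, so $W(n) = 2^{\omega(n)}$.

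To bound $2^{\omega(n)}$, I would fix $\delta > 0$ and choose a threshold $C = 2^{2/\delta}$, so that $p^{\delta/2} \geq 2$ for every prime $p \geq C$. Writing $n = n_1 n_2$ with $n_1$ the part of $n$ supported on primes below $C$ and $n_2$ the part supported on primes at or above $C$, the contribution of $n_1$ is bounded by a constant $2^{N_C}$ depending only on $\delta$, where $N_C$ is the number of primes below $C$. For $n_2$, the key inequality is
\[ 2^{\omega(n_2)} \leq \prod_{p \mid n_2} p^{\delta/2} \leq n_2^{\delta/2} \leq n^{\delta/2} , \]
which handles the large primes in a single stroke.

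Combining the two estimates yields $W(n) \leq 2^{N_C} n^{\delta/2}$, and therefore $W(n)/n^\delta \leq 2^{N_C}/n^{\delta/2} \to 0$ as $n \to \infty$. There is no real obstacle to this argument; the only subtle point is the choice of the threshold $C$, which must be large enough that each prime above it contributes at least a factor $2$ to $n^{\delta/2}$, cleanly separating the bounded ``small prime'' contribution from the ``large prime'' contribution that is controlled by $n^{\delta/2}$ itself.
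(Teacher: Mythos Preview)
Your argument is correct and is essentially the classical textbook proof: reduce to $W(n)=2^{\omega(n)}$, separate the primes below a threshold $C=2^{2/\delta}$ (whose contribution is bounded by the constant $2^{N_C}$) from those at or above $C$ (each of which contributes at least a factor $2$ to $n^{\delta/2}$), and conclude $W(n)\le 2^{N_C}n^{\delta/2}=o(n^{\delta})$. There is nothing to compare against here, since the paper does not supply its own proof of this lemma but simply cites it from Apostol~\cite{apostol76}*{p.~296}; the argument you give is precisely the one found in standard analytic number theory texts, including that reference.
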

We will also need the following lemmas.
\begin{lemma}[\cite{lenstraschoof87}*{Lemma~2.9}] \label{lemma:w(x^n-1)}
Let $q$ be a prime power and $n$ a positive integer. Then, we have $W(F_n') \leq 2^{\frac 12 (n+\gcd (n,q-1))}$.
In particular, $W (F_n') \leq 2^n$, while the equality holds if and only if $n \mid q - 1$.
Furthermore, if $n \nmid q - 1$, $W (F_n') \leq 2^{3n/4}$.
\end{lemma}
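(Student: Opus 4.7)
The plan is to reduce to the separable case and count irreducible factors via cyclotomic polynomials. Let $p=\cha(\F_q)$ and write $n=p^a m$ with $\gcd(m,p)=1$. Then $x^n-1=(x^m-1)^{p^a}$, so the squarefree divisors of $x^n-1$ coincide with those of $x^m-1$. Since $\gcd(m,p)=1$, the polynomial $x^m-1$ is itself squarefree, and therefore $W(x^n-1)=W(x^m-1)=2^{r(m)}$, where $r(m)$ denotes the number of distinct monic irreducible factors of $x^m-1$ in $\F_q[x]$. Using the cyclotomic factorization $x^m-1=\prod_{d\mid m}\Phi_d(x)$ and the fact that $\Phi_d$ splits over $\F_q$ into $\phi(d)/\ord_d(q)$ irreducible factors, one gets
\[
r(m)=\sum_{d\mid m}\frac{\phi(d)}{\ord_d(q)}.
\]

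The central step is the inequality $2r(m)\leq m+\gcd(m,q-1)$. I would split the sum above according to whether $\ord_d(q)=1$ (equivalently $d\mid q-1$) or $\ord_d(q)\geq 2$. For the former set of $d$, the contributing $d$ are exactly the divisors of $\gcd(m,q-1)$, so $\sum_{d\mid\gcd(m,q-1)}\phi(d)=\gcd(m,q-1)$. For the remaining $d\mid m$ with $d\nmid q-1$, each term is bounded by $\phi(d)/2$, and summing these against the identity $\sum_{d\mid m}\phi(d)=m$ gives
\[
r(m)\leq \gcd(m,q-1)+\tfrac12\bigl(m-\gcd(m,q-1)\bigr)=\tfrac12\bigl(m+\gcd(m,q-1)\bigr).
\]
Since $m\leq n$ and $\gcd(m,q-1)=\gcd(n,q-1)$ (because $p\nmid q-1$), the first asserted bound follows.

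The coarser bound $W(x^n-1)\leq 2^n$ is immediate from $\gcd(n,q-1)\leq n$. For the equality case, $W(x^n-1)=2^n$ forces $m=n$ (so $p\nmid n$) and $\gcd(m,q-1)=m$, i.e.\ $n\mid q-1$; conversely, if $n\mid q-1$ then every $d\mid n$ satisfies $\ord_d(q)=1$ and $r(n)=\sum_{d\mid n}\phi(d)=n$. Finally, for $n\nmid q-1$ I would split into two cases: if $p\mid n$, then $m\leq n/2$ and hence $r(m)\leq m\leq n/2\leq 3n/4$; if $p\nmid n$, then $m=n$ but $\gcd(n,q-1)$ is a proper divisor of $n$, so $\gcd(n,q-1)\leq n/2$, and the main bound yields $r(n)\leq (n+n/2)/2=3n/4$.

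No step looks genuinely hard; the only subtlety is being careful that, in the elementary inequality above, we use $\phi(d)/\ord_d(q)\leq \phi(d)/2$ precisely when $d\nmid q-1$, and that the passage from $m$ back to $n$ does not lose the $\gcd$ term, which is ensured by $\gcd(n,q-1)=\gcd(m,q-1)$.
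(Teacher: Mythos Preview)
The paper does not prove this lemma at all; it simply quotes it from \cite{lenstraschoof87}*{Lemma~2.9}. Your argument is correct and is essentially the standard proof found there: reduce to the $p$-free part $m$ of $n$, count irreducible factors via $r(m)=\sum_{d\mid m}\phi(d)/\ord_d(q)$, and split the sum according to whether $\ord_d(q)=1$. The equality analysis and the $3n/4$ refinement are also handled correctly.
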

For small values of $q$, we will use the following.
\begin{lemma}[\cite{aguirreneumann21}*{Lemma~3.7}] \label{lemma:w(x^n-1):smallq}
Let $q$ be a prime power and $n$ a positive integer. Then, $W(F_n') \leq 2^{\frac{n+a}{b}}$ for some $a,b\in\Z$. In particular, for $q\geq 29$, we have $(a,b)=(0,1)$; for $7\leq q\leq 27$, we have $(a,b)=(q-1,2)$ and for $q\leq 5$ we have the following values for $a,b$:
\begin{center}
  \begin{tabular}{lll|lll}
    $q$ & $a$ & $b$  & $q$ & $a$ & $b$\\ \hline
    $2$ & $14$ & $5$ &
    $3$ & $20$ & $4$ \\
    $4$ & $12$ & $3$ &
    $5$ & $18$ & $3$
  \end{tabular}
\end{center}
\end{lemma}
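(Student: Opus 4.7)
The plan is to reduce to the case $\gcd(n,q) = 1$, count distinct irreducible factors via Frobenius orbits on $n$-th roots of unity, and then apply a small linear-programming argument on the degree profile. First, writing $n = p^s m$ with $p$ the characteristic of $\F_q$ and $\gcd(p,m) = 1$, the identity $x^n - 1 = (x^m - 1)^{p^s}$ over $\F_q$ shows that $W(x^n - 1) = W(x^m - 1)$, so we may assume $\gcd(n,q)=1$ throughout.

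Next, let $r_d$ denote the number of monic irreducible factors of $x^n - 1$ over $\F_q$ of degree exactly $d$, and set $r := \sum_d r_d$, so that $W(x^n - 1) = 2^r$. Grouping $n$-th roots of unity by their minimal subfield of definition yields the identity
\[
\sum_{d' \mid d} d'\, r_{d'} \;=\; \gcd(n,\, q^d - 1),
\]
because each side counts the $n$-th roots of unity lying in $\F_{q^d}$; the case $d$ large gives $\sum_d d\, r_d = n$.

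The key inequality is that for any integer $k \geq 0$,
\[
(k+1)\, r \;-\; n \;=\; \sum_{d \geq 1} (k+1-d)\, r_d \;\leq\; \sum_{d=1}^{k} (k+1-d)\, r_d ,
\]
since the tail with $d \geq k+2$ contributes a nonpositive quantity. Hence $r \leq (n + \sum_{d=1}^{k}(k+1-d) r_d)/(k+1)$, and each $r_d$ is in turn controlled through the identity above using $\gcd(n,q^d-1) \leq q^d - 1$. The three regimes of the lemma then follow by choosing $k$ appropriately: $k = 0$ for $q \geq 29$ (giving $r \leq n$, hence $(a,b)=(0,1)$); $k = 1$ for $7 \leq q \leq 27$, with $r_1 \leq q - 1$, recovering $(a,b)=(q-1,2)$ in line with Lemma~\ref{lemma:w(x^n-1)}; and $k \in \{4,3,2,2\}$ for $q \in \{2,3,4,5\}$ respectively.

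The main obstacle is the case $q \leq 5$, which reduces to explicitly maximising the linear form $k r_1 + (k-1) r_2 + \cdots + r_k$ subject to the constraints $\sum_{d' \mid d} d'\, r_{d'} \leq q^d - 1$. For $q = 2$ and $k = 4$, one checks $r_1 \leq 1,\, r_2 \leq 1,\, r_3 \leq 2,\, r_4 \leq 3$, and these maxima are jointly attainable precisely when $105 \mid n$, yielding $4 + 3 + 4 + 3 = 14$; the remaining cases are analogous, producing the constants $20$, $12$ and $18$. A small extra check then confirms that in each case the divisibility conditions on $n$ required to attain each $r_d$-bound are mutually consistent, completing the argument.
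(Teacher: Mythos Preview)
The paper does not supply a proof of this lemma; it is quoted from \cite{aguirreneumann21}*{Lemma~3.7} and used as a black box. Your argument is a correct, self-contained proof and is essentially the same computation carried out in that reference. The reduction to $\gcd(n,q)=1$ is valid since $m\leq n$ makes the bound for $m$ imply that for $n$; the orbit-counting identity $\sum_{d'\mid d}d'\,r_{d'}=\gcd(n,q^d-1)$ holds because both sides count the $n$-th roots of unity lying in $\F_{q^d}$; and the linear relaxation $(k+1)r\leq n+\sum_{d=1}^k(k+1-d)r_d$, with the choices $k=0$, $k=1$, and $k\in\{4,3,2,2\}$ for $q\in\{2,3,4,5\}$, does yield exactly the constants $14$, $20$, $12$, $18$ after maximising subject to $\sum_{d'\mid d}d'\,r_{d'}\leq q^d-1$. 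One minor remark: the attainability discussion (e.g.\ $105\mid n$ for $q=2$) establishes sharpness but is not needed for the upper bound, which is all the lemma asserts; you could drop that final paragraph without loss.
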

We are now in position to prove the main result of this section.
\begin{theorem} \label{thm:main_asymptotic}
    Let $\F_{q^n}/\F_q$ be an $(n_1,n_2)$-partially completely basic extension, where $n_2$ is large compared to $n_1$, then $\F_{q^n}/\F_q$ possesses the MM property, unless $n$ is odd and $q=2$.
\end{theorem}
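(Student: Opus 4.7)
The plan is to deduce Theorem~\ref{thm:main_asymptotic} directly from Theorem~\ref{thm:main_condition}, by showing that the sufficient condition
\[ q^{n_2/2} > n_1\, W(q^n-1)\, W(x^{n_2}-1) \]
automatically holds once $n_2$ is large enough in terms of $n_1$ (and $q$). Taking logarithms base $q$, this becomes
\[ \frac{n_2}{2} > \log_q n_1 + \log_q W(q^n-1) + \log_q W(x^{n_2}-1), \]
so the task reduces to bounding the two $W$-quantities by functions whose slope in $n_2$ is strictly less than $1/2$.

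For the polynomial factor, I would invoke Lemma~\ref{lemma:w(x^n-1):smallq}, which gives $W(x^{n_2}-1) \leq 2^{(n_2+a)/b}$ with constants $(a,b)$ depending only on $q$. A direct check against the tabulated values confirms that $q^b > 4$ in every admissible case, which translates to $\log_q 2 / b < 1/2$. Hence there exist constants $\alpha_q < 1/2$ and $\beta_q$, both depending only on $q$, such that
\[ \log_q W(x^{n_2}-1) \leq \alpha_q\, n_2 + \beta_q. \]
For the integer factor, Lemma~\ref{lemma:apostol} yields $W(q^n-1) = o((q^n-1)^{\varepsilon})$ for every $\varepsilon > 0$. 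Choosing $\varepsilon := (1 - 2\alpha_q)/(4 n_1)$, which is strictly positive since $\alpha_q < 1/2$, gives $\log_q W(q^n-1) \leq \varepsilon n_1 n_2 = \tfrac{1 - 2\alpha_q}{4}\, n_2$ for all sufficiently large $n$ (equivalently, for all sufficiently large $n_2$, once $n_1$ and $q$ are fixed).

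Combining the two estimates, the right-hand side of the logarithmic inequality is at most
\[ \log_q n_1 + \beta_q + \left( \alpha_q + \frac{1 - 2\alpha_q}{4} \right) n_2 = \log_q n_1 + \beta_q + \frac{1 + 2\alpha_q}{4}\, n_2. \]
Since $(1 + 2\alpha_q)/4 < 1/2$, the slope of the right-hand side in $n_2$ is strictly below $1/2$, so the inequality is achieved for $n_2$ sufficiently large compared to $n_1$, and Theorem~\ref{thm:main_condition} then delivers the MM property.

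The principal obstacle is not so much a difficulty as a bookkeeping point: one needs the slope constant $\alpha_q$ from the polynomial divisor-bound to be uniformly below $1/2$ for every prime power $q$, which is exactly what the $q^b > 4$ inspection furnishes; and then $\varepsilon$ must be calibrated in terms of $n_1$ so that the contribution from $W(q^n-1)$ does not consume the available slack. With these calibrations in hand, the remaining argument is a routine asymptotic estimate.
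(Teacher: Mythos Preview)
Your proof is correct and follows essentially the same strategy as the paper: reduce to the inequality of Theorem~\ref{thm:main_condition}, bound $W(x^{n_2}-1)$ by an exponential whose base is strictly below $q^{1/2}$, and absorb $W(q^n-1)$ using Lemma~\ref{lemma:apostol} with an $\varepsilon$ tuned to $n_1$. The only organizational difference is that the paper splits into the cases $q>4$ (using the crude bound $W(x^{n_2}-1)\le 2^{n_2}$ from Lemma~\ref{lemma:w(x^n-1)}) and $q\le 4$ (invoking Lemma~\ref{lemma:w(x^n-1):smallq}), whereas you treat all $q$ uniformly via Lemma~\ref{lemma:w(x^n-1):smallq} together with the check $q^b>4$; your explicit calibration $\varepsilon=(1-2\alpha_q)/(4n_1)$ also makes transparent a step the paper leaves implicit in its $o$-notation.
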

\begin{proof}
First we assume $q>2$. Fix some $n_1\geq 2$. Theorem~\ref{thm:main_condition}, along with Lemmas~\ref{lemma:apostol} and \ref{lemma:w(x^n-1)}, implies that an $(n_1,n_2)$-partially completely basic extension possesses the MM property, given that
\[ \left( \frac{\sqrt{q}}{2} \right)^{n_2} > 4 n_1 \cdot o(q^{n_2/4}) . \]
This implies the result for $q>4$. For $q=3$ and $4$, Lemma~\ref{lemma:w(x^n-1):smallq} yields $W(F_{n_2}') \leq 2^{\frac{n_2+20}{3}}$ and the above condition is replaced by  
\[ \left( \frac{\sqrt{q}}{\sqrt[3]{2}} \right)^{n_2} > 2^{26/3} n_1 \cdot o(q^{n_2/4}) , \]
which settles the cases $q=3$ and $4$. Finally, for $q=2$, we are confined to $n$ even, thus we may use Theorem~\ref{thm:main_condition_2} instead. Now, Lemma~\ref{lemma:w(x^n-1):smallq} entails $W(F_{n_2}') \leq 2^{\frac{n_2+14}{5}}$ and the above condition is replaced by  
\[ 2^{3n_2/10} > 2^{14/5} n_1 \cdot o(2^{n_2/5}) , \]
which completes our proof.
\end{proof}
\section{Computational aspects} \label{sec:comp}
In this section we explore the computational aspects of Theorem~\ref{thm:main_condition}.
First, with the condition of Theorem~\ref{thm:main_condition} in mind, we see that a concrete expression of  Lemma~\ref{lemma:apostol} is needed.
	
	\begin{lemma}[\cite{cohenhuczynska03}*{Lemma~3.7}] \label{lemma:W(n)}
	   For any $\alpha \in \mathbb{N}$ and a positive real number $\nu$, $W(\alpha) \leq \C_\nu^{(\alpha)}\cdot \alpha^{1/\nu}$, where $\C_\nu^{(\alpha)} = \prod_{i=1}^{t} 2/(p_i^{1/\nu})$ and $p_1, p_2, \dots, p_t$ are the primes less than or equal to $2^\nu$ that divide $\alpha$.
	   \end{lemma}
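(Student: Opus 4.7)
The plan is to reduce the inequality to a product comparison driven by the threshold $2^\nu$, then read off the constants from a direct computation. First I would write the prime factorisation
\[
\alpha = \prod_{i=1}^r p_i^{e_i}\cdot\prod_{j=1}^s q_j^{f_j},
\]
where $p_1,\dots ,p_r$ are the distinct prime divisors of $\alpha$ with $p_i\le 2^\nu$ and $q_1,\dots ,q_s$ are the distinct prime divisors of $\alpha$ with $q_j>2^\nu$. Since every squarefree divisor of $\alpha$ corresponds to a subset of $\{p_1,\dots ,p_r,q_1,\dots ,q_s\}$, we have $W(\alpha)=2^{r+s}$.

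Next I would rearrange the target inequality $2^{r+s}\le \C_\nu^{(\alpha)}\alpha^{1/\nu}$. Substituting $\C_\nu^{(\alpha)}=\prod_{i=1}^r 2/p_i^{1/\nu}$ and cancelling $2^r$ on both sides, the claim becomes
\[
2^s \prod_{i=1}^r p_i^{1/\nu} \le \prod_{i=1}^r p_i^{e_i/\nu}\cdot \prod_{j=1}^s q_j^{f_j/\nu}.
\]
This I would verify factor by factor. On the ``small'' side, $e_i\ge 1$ yields $p_i^{1/\nu}\le p_i^{e_i/\nu}$, which handles the first product. On the ``large'' side, $q_j>2^\nu$ gives $q_j^{1/\nu}>2$, so $q_j^{f_j/\nu}> 2^{f_j}\ge 2$ for every $j$, and multiplying these $s$ inequalities produces $\prod_j q_j^{f_j/\nu}> 2^s$. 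Combining the two parts delivers the stated bound.

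For the universal estimate $\C_\nu^{(\alpha)}\le \C_\nu$, I would define $\C_\nu:=\prod_{p\le 2^\nu} 2/p^{1/\nu}$, the product extended over all primes up to $2^\nu$. Since every factor satisfies $2/p^{1/\nu}\ge 1$ precisely because $p\le 2^\nu$, dropping the divisibility restriction and enlarging the index set can only increase the product, so $\C_\nu^{(\alpha)}\le \C_\nu$ uniformly in $\alpha$. Finally, the numerical constants $\C_8=4514.6266$ and $\C_{12}=1.0573\cdot 10^{24}$ are obtained by direct numerical evaluation of these finite products over the primes up to $256$ and $4096$, respectively.

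There is no real obstacle here; the statement is a clean elementary estimate once one splits the primes of $\alpha$ at the threshold $2^\nu$. The only subtle point is to notice that the two ingredients fit together exactly: the exponent bound $e_i\ge 1$ is what allows us to keep one copy of $p_i^{1/\nu}$ from each small prime, while the strict inequality $q_j^{1/\nu}>2$ is what absorbs the factor $2^s$ on the left, making the argument tight enough to produce a sharp constant $\C_\nu$ that depends only on $\nu$.
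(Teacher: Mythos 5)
Your argument is correct and is essentially the standard proof of this bound. Note that the paper itself offers no proof of this lemma --- it is quoted verbatim from Cohen--Huczynska \cite{cohenhuczynska03}*{Lemma~3.7} --- and your splitting of the prime divisors of $\alpha$ at the threshold $2^\nu$, cancelling $2^r$ against the small primes and absorbing $2^s$ via $q_j^{1/\nu}>2$ for the large ones, is exactly the argument given in that reference, with the constants $\C_8$ and $\C_{12}$ obtained by direct evaluation of the finite products over all primes up to $2^\nu$.
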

	  Let $p_1,\ldots ,p_s$ be \emph{all} the primes less or equal to $2^\nu$. We note that, in this section, in addition to the number $\mathcal C_\nu^{(\alpha)}$ defined above we will use the following two numbers:
	  \begin{enumerate}
	  \item $\mathcal C_\nu := \prod_{i=1}^{s} 2/(p_i^{1/\nu})$. Clearly, $\mathcal C_\nu^{(\alpha)} \leq \mathcal C_\nu$ for all $\alpha\in\Z$.
	  \item For any $1\leq j\leq s$, $\mathcal C_\nu^{[p_j]} := \mathcal C_{\nu} p_j^{1/\nu}/2$. Clearly, $\mathcal C_\nu^{(\alpha)} \leq \mathcal C_\nu^{[p_j]}$ for all $\alpha\in\Z$ with $p_j\nmid\alpha$.
	  \end{enumerate}
	  Furthermore, the number $\mathcal C_\nu^{(\alpha)}$ in the statement of Lemma~\ref{lemma:W(n)} can be freely replaced by either $\mathcal C_\nu$, or, if the prime $r\leq 2^\nu$ does not divide $\alpha$, by $\mathcal C_\nu^{[r]}$.

\subsection{An algorithm for the MM property} \label{subsec:algo}

Now, we present an algorithm, that takes $n_1$ as its input and aims to establish that all $(n_1,n_2)$-partially completely basic extensions possess the MM property is given.
The algorithm is comprised by the  following steps:
\begin{enumerate}[label=\textbf{Step \arabic*}, ref=\arabic*]
	\item Set $n_{\min}$ as the minimum value of $n_2$ that is not covered by Theorem~\ref{thm:hh} for the given $n_1$.
	\item For $n_2=n_{\min}$, find $q_{\max}$, the minimum prime power $q$ that satisfies the condition of Theorem~\ref{thm:main_condition}, where the numbers $W(F_{n_2}')$ and $W(q')$ are estimated by Lemmas~\ref{lemma:w(x^n-1)} and \ref{lemma:W(n)}, respectively. We note that, with the notation of Lemma~\ref{lemma:W(n)}, we take $\nu=4n_1$ and we bound the constant $\mathcal C_{4n_1}^{(q^{n_1n_2}-1)}$ by the generic bound $\mathcal C_{4n_1}$. At this point we are left with a finite number of prime powers $q$ that are not covered.
	\item For each prime power $3\leq q< q_{\max}$, find the maximum value of $n_2$, $n_{\max}^{(q)}$, not satisfying the condition of the previous step. Note that in this step, every $q$ is treated as fixed, the sharper estimate $\mathcal C_{4n_1}^{[p]}$ (where $p$ stands for the unique prime divisor of $q$) is employed over the generic $\mathcal{C}_{4n_1}$ and $n_2$ is treated as a variable. Upon completion, for each prime power $3\leq q< q_{\max}$, we are left with a range $n_{\min}\leq n_2\leq n_{\max}^{(q)}$ of yet unsettled cases.\footnote{Here, observe that since the sharper $\mathcal C_{4n_1}^{[p]}$ is favored over the generic $\mathcal{C}_{4n_1}$ of the previous step, it is possible that $n_{\max}^{(q)} < n_{\min}$; this means that the corresponding prime power $q$ is settled.}\label{s3}
	\item For each prime power $3\leq q\leq q_{\max}$ and integers $n_{\min}\leq n_2\leq n_{\max}^{(q)}$, we explicitly identify which values of $n_2$ are such that \label{s4}
	\begin{itemize}
		\item $\F_{q^{n_2}}/\F_q$ is completely basic,
		\item $\gcd(n_1,n_2)=1$, and
		\item $q^{n_1n_2}>10^{80}$.
	\end{itemize}
	The first two conditions ensure that the extension $\F_{q^{n_1n_2}}\F_q$ is $(n_1,n_2)$-partially completely basic and the third one that the extension is not already covered by Theorem~\ref{thm:hh}. At this point, we are left with a finite list of pairs $(q,n_2)$ that correspond to the $(n_1,n_2)$-partially completely basic extensions not already covered.
	\item For each of these pairs we check the condition of Theorem~\ref{thm:main_condition}, where $W(F_{n_2}')$ is explicitly computed and $W(q')$ is estimated by Lemma~\ref{lemma:W(n)}, where, under the notation Lemma~\ref{lemma:W(n)}, $\nu=4n_1$ and the constant $\mathcal C_{4n_1}^{(q^{n_1n_2}-1)}$ is explicitly computed. Remove from the list of possible exceptions the pairs that pass this test. \label{s5}
	\item For the remaining pairs we check the condition of Theorem~\ref{thm:main_condition}, where $W(F_{n_2}')$ and $W(q')$ are both explicitly computed. Remove from the list of possible exceptions the pairs that pass this test. \label{s6}
	\item If the list of possible exceptions remain nonempty at this point, return \textbf{FAIL}.
	\item Now, we move on to the case $q=2$. We repeat Steps~\ref{s3}--\ref{s6} with the difference that Theorem~\ref{thm:main_condition_2} is used over Theorem~\ref{thm:main_condition} and that, in Step~\ref{s4}, we additionally demand $n_2$ to be even. \label{s8}
	\item If the list of possible exceptions remain nonempty at this point, return \textbf{FAIL}. Otherwise, return \textbf{SUCCESS}.
\end{enumerate}
The rest of this section is dedicated into applying this algorithm in an attempt to establish the MM property for $(n_1,n_2)$-partially completely basic extensions for small values of $n_1$.
\subsection{Concrete results} \label{subsec:concrete}
We successfully applied the algorithm of Subsect.~\ref{subsec:algo} for $n_1=2$ and $n_1=3$, while we obtained partial results for $n_1=4$. Namely, we establish the following theorem.
\begin{theorem} \label{thm:main_concrete}
    Let $\F_{q^n}/\F_q$ be an $(n_1,n_2)$-partially completely basic extension. Then $\F_{q^n}/\F_q$ possesses the MM property in the following cases:
    \begin{enumerate}
    \item $n_1=2$ or $3$.
    \item $n_1=4$ and either
    	\begin{itemize}
		\item $q\geq 199211272511189639\,(\approx 2\cdot 10^{17})$,
		\item $n_2\geq n_1^{n_1} = 256$ and $q\geq 22271$, or,
		\item $n_2\geq n_1^6 = 4096$.
		\end{itemize} 
    \end{enumerate}
\end{theorem}
For our calculations we used the \textsc{SageMath} system, running on a modern laptop. In the following subsections we present the most important details of our calculations.
\subsubsection{The cases $n_1=2$ and $n_1=3$}
 The most essential information of our implementation for the cases $n_1=2$ and $n_1=3$ is summarized in Table~\ref{tab:summary}.
\begin{table}[h!]
\begin{center}
\begin{tabular}{l|lll|p{1.8em}p{1.8em}p{1.8em}p{1.8em}p{1.8em}p{1.8em}|l}
 \multicolumn{4}{c}{} & \multicolumn{6}{c}{\#{} of pairs after Step} & {} \\ 
  $n_1$ & $n_{\min}$ & $\mathcal C_{4n_1}$ & $q_{\max}$ & \ref{s4} & \ref{s5} & \ref{s6}& \ref{s8}.\ref{s4} & \ref{s8}.\ref{s5} & \ref{s8}.\ref{s6} & time \\ \hline
  $2$ & $102$ & $4514.6265$ & $11$ & 15 & 0 & 0 & 0 & 0 & 0 & $\sim$1~sec. \\ 
  $3$ & $67$ & $1.057\cdot 10^{24}$ & $487$ & 3011 & 0 & 0 & 111 & 0 & 0 & $\sim$30~sec.
\end{tabular}
\end{center}
  \caption{Summary of the implementation of the algorithm of Subsect.~\ref{subsec:algo} for $n_1=2$ and $n_1=3$.\label{tab:summary}}
\end{table}

We note that the zeros appearing in the columns under Steps~\ref{s6}, and \ref{s8}.\ref{s6} show that the application was successful. The zeros in the columns under Steps~\ref{s5}, and \ref{s8}.\ref{s5} show that we never had to resort to computing $W(q')$, which is an expensive computation, as the computer should first factor $q^{n_1n_2}-1$ into primes. Furthermore, the zero in the entry that corresponds to $n_1=2$ and Step~\ref{s8}.\ref{s4} is the result of the fact that, in this case, we expect $n_2$ to be both odd (as it has to be relatively prime to $n_1=2$) and even (as it has to be divided by $p=2$).

Next, in Table~\ref{tab:3}, we present the numbers appearing in Step~\ref{s3} for $n_1=2$ and, in Table~\ref{tab:2}, we present the numbers appearing in Step~\ref{s3} for $n_1=3$. We note that in Table~\ref{tab:3}, the case $q=2$ is missing, as, we already explained, this case is irrelevant and, in Table~\ref{tab:2}, the entries that correspond to $q=2$ derive from Step~\ref{s8}.\ref{s3}.
\begin{table}[h!]
  \begin{center}
      \begin{tabular}{lll|lll|lll}
        $q$ & $n_{\max}^{(q)}$ & $\C_{4n_1}^{[p]}$ & $q$ & $n_{\max}^{(q)}$ & $\C_{4n_1}^{[p]}$ & $q$ & $n_{\max}^{(q)}$ & $\C_{4n_1}^{[p]}$ \\ \hline
 $3$ & $133$ & $2589.5959$ &
 $4$ & $108$ & $2461.6176$ &
 $5$ & $81$ & $2760.3433$ \\
 $7$ & $84$ & $2878.9167$ &
 $8$ & $68$ & $2461.6176$ &
 $9$ & $60$ & $2589.5959$ 
      \end{tabular}
  \end{center}
  \caption{Prime powers $3 \leq q<q_{\max}$, the corresponding $n_{\max}^{(q)}$ and the corresponding value of $\C_{4n_1}^{[p]}$ for $n_1=2$.\label{tab:3}}
\end{table}

  \begin{center}
\begin{longtable}{lll|lll|lll}
        $q$ & $n_{\max}^{(q)}$ & $\C_{4n_1}^{[p]}$ & $q$ & $n_{\max}^{(q)}$ & $\C_{4n_1}^{[p]}$ & $q$ & $n_{\max}^{(q)}$ & $\C_{4n_1}^{[p]}$ \endfirsthead
        \multicolumn{9}{l}{Continued from previous page.} \\ \\
        $q$ & $n_{\max}^{(q)}$ & $\C_{4n_1}^{[p]}$ & $q$ & $n_{\max}^{(q)}$ & $\C_{4n_1}^{[p]}$ & $q$ & $n_{\max}^{(q)}$ & $\C_{4n_1}^{[p]}$ \\ \hline
        \endhead
        \\ \multicolumn{9}{l}{Continued on next page.} \endfoot
        \endlastfoot
         \hline
  $2$ & $1666$ & $5.6008\cdot 10^{23}$ &
 $3$ & $599$ & $5.7933\cdot 10^{23}$ &
 $4$ & $517$ & $5.6008\cdot 10^{23}$ \\
 $5$ & $357$ & $6.0453\cdot 10^{23}$ &
 $7$ & $421$ & $6.2172\cdot 10^{23}$ &
 $8$ & $341$ & $5.6008\cdot 10^{23}$ \\
 $9$ & $294$ & $5.7933\cdot 10^{23}$ &
 $11$ & $238$ & $6.4558\cdot 10^{23}$ &
 $13$ & $207$ & $6.5463\cdot 10^{23}$ \\
 $16$ & $179$ & $5.6008\cdot 10^{23}$ &
 $17$ & $173$ & $6.6943\cdot 10^{23}$ &
 $19$ & $162$ & $6.7566\cdot 10^{23}$ \\
 $23$ & $148$ & $6.8651\cdot 10^{23}$ &
 $25$ & $142$ & $6.0453\cdot 10^{23}$ &
 $27$ & $138$ & $5.7933\cdot 10^{23}$ \\
 $29$ & $382$ & $6.9990\cdot 10^{23}$ &
 $31$ & $344$ & $7.0380\cdot 10^{23}$ &
 $32$ & $327$ & $5.6008\cdot 10^{23}$ \\
 $37$ & $271$ & $7.1425\cdot 10^{23}$ &
 $41$ & $242$ & $7.2039\cdot 10^{23}$ &
 $43$ & $230$ & $7.2325\cdot 10^{23}$ \\
 $47$ & $211$ & $7.2863\cdot 10^{23}$ &
 $49$ & $203$ & $6.2172\cdot 10^{23}$ &
 $53$ & $190$ & $7.3597\cdot 10^{23}$ \\
 $59$ & $175$ & $7.4257\cdot 10^{23}$ &
 $61$ & $170$ & $7.4464\cdot 10^{23}$ &
 $64$ & $163$ & $5.6008\cdot 10^{23}$ \\
 $67$ & $159$ & $7.5048\cdot 10^{23}$ &
 $71$ & $153$ & $7.5412\cdot 10^{23}$ &
 $73$ & $150$ & $7.5587\cdot 10^{23}$ \\
 $79$ & $143$ & $7.6086\cdot 10^{23}$ &
 $81$ & $140$ & $5.7933\cdot 10^{23}$ &
 $83$ & $139$ & $7.6400\cdot 10^{23}$ \\
 $89$ & $133$ & $7.6845\cdot 10^{23}$ &
 $97$ & $127$ & $7.7399\cdot 10^{23}$ &
 $101$ & $124$ & $7.7660\cdot 10^{23}$ \\
 $103$ & $123$ & $7.7787\cdot 10^{23}$ &
 $107$ & $120$ & $7.8034\cdot 10^{23}$ &
 $109$ & $119$ & $7.8155\cdot 10^{23}$ \\
 $113$ & $117$ & $7.8390\cdot 10^{23}$ &
 $121$ & $112$ & $6.4558\cdot 10^{23}$ &
 $125$ & $111$ & $6.0453\cdot 10^{23}$ \\
 $127$ & $110$ & $7.9156\cdot 10^{23}$ &
 $128$ & $109$ & $5.6008\cdot 10^{23}$ &
 $131$ & $109$ & $7.9361\cdot 10^{23}$ \\
 $137$ & $106$ & $7.9658\cdot 10^{23}$ &
 $139$ & $106$ & $7.9754\cdot 10^{23}$ &
 $149$ & $102$ & $8.0217\cdot 10^{23}$ \\
 $151$ & $102$ & $8.0306\cdot 10^{23}$ &
 $157$ & $100$ & $8.0568\cdot 10^{23}$ &
 $163$ & $98$ & $8.0820\cdot 10^{23}$ \\
 $167$ & $97$ & $8.0983\cdot 10^{23}$ &
 $169$ & $97$ & $6.5463\cdot 10^{23}$ &
 $173$ & $96$ & $8.1222\cdot 10^{23}$ \\
 $179$ & $95$ & $8.1453\cdot 10^{23}$ &
 $181$ & $94$ & $8.1528\cdot 10^{23}$ &
 $191$ & $92$ & $8.1894\cdot 10^{23}$ \\
 $193$ & $92$ & $8.1966\cdot 10^{23}$ &
 $197$ & $91$ & $8.2106\cdot 10^{23}$ &
 $199$ & $91$ & $8.2175\cdot 10^{23}$ \\
 $211$ & $89$ & $8.2577\cdot 10^{23}$ &
 $223$ & $87$ & $8.2958\cdot 10^{23}$ &
 $227$ & $86$ & $8.3081\cdot 10^{23}$ \\
 $229$ & $86$ & $8.3142\cdot 10^{23}$ &
 $233$ & $85$ & $8.3262\cdot 10^{23}$ &
 $239$ & $85$ & $8.3439\cdot 10^{23}$ \\
 $241$ & $84$ & $8.3497\cdot 10^{23}$ &
 $243$ & $84$ & $5.7933\cdot 10^{23}$ &
 $251$ & $83$ & $8.3780\cdot 10^{23}$ \\
 $256$ & $82$ & $5.6008\cdot 10^{23}$ &
 $257$ & $82$ & $8.3945\cdot 10^{23}$ &
 $263$ & $82$ & $8.4107\cdot 10^{23}$ \\
 $269$ & $81$ & $8.4265\cdot 10^{23}$ &
 $271$ & $81$ & $8.4317\cdot 10^{23}$ &
 $277$ & $80$ & $8.4471\cdot 10^{23}$ \\
 $281$ & $80$ & $8.4572\cdot 10^{23}$ &
 $283$ & $80$ & $8.4622\cdot 10^{23}$ &
 $289$ & $79$ & $6.6943\cdot 10^{23}$ \\
 $293$ & $79$ & $8.4867\cdot 10^{23}$ &
 $307$ & $78$ & $8.5198\cdot 10^{23}$ &
 $311$ & $77$ & $8.5290\cdot 10^{23}$ \\
 $313$ & $77$ & $8.5336\cdot 10^{23}$ &
 $317$ & $77$ & $8.5426\cdot 10^{23}$ &
 $331$ & $76$ & $8.5734\cdot 10^{23}$ \\
 $337$ & $75$ & $8.5863\cdot 10^{23}$ &
 $343$ & $74$ & $6.2172\cdot 10^{23}$ &
 $347$ & $74$ & $8.6072\cdot 10^{23}$ \\
 $349$ & $74$ & $8.6113\cdot 10^{23}$ &
 $353$ & $74$ & $8.6195\cdot 10^{23}$ &
 $359$ & $74$ & $8.6316\cdot 10^{23}$ \\
 $361$ & $73$ & $6.7566\cdot 10^{23}$ &
 $367$ & $73$ & $8.6475\cdot 10^{23}$ &
 $373$ & $73$ & $8.6592\cdot 10^{23}$ \\
 $379$ & $72$ & $8.6707\cdot 10^{23}$ &
 $383$ & $72$ & $8.6783\cdot 10^{23}$ &
 $389$ & $72$ & $8.6896\cdot 10^{23}$ \\
 $397$ & $71$ & $8.7043\cdot 10^{23}$ &
 $401$ & $71$ & $8.7116\cdot 10^{23}$ &
 $409$ & $71$ & $8.7259\cdot 10^{23}$ \\
 $419$ & $70$ & $8.7435\cdot 10^{23}$ &
 $421$ & $70$ & $8.7470\cdot 10^{23}$ &
 $431$ & $70$ & $8.7641\cdot 10^{23}$ \\
 $433$ & $70$ & $8.7675\cdot 10^{23}$ &
 $439$ & $69$ & $8.7776\cdot 10^{23}$ &
 $443$ & $69$ & $8.7842\cdot 10^{23}$ \\
 $449$ & $69$ & $8.7941\cdot 10^{23}$ &
 $457$ & $68$ & $8.8070\cdot 10^{23}$ &
 $461$ & $68$ & $8.8134\cdot 10^{23}$ \\
 $463$ & $68$ & $8.8166\cdot 10^{23}$ &
 $467$ & $68$ & $8.8229\cdot 10^{23}$ &
 $479$ & $67$ & $8.8416\cdot 10^{23}$ \\
  \caption{Prime powers $2 \leq q<q_{\max}$, the corresponding $n_{\max}^{(q)}$ and the corresponding value of $\C_{4n_1}^{[p]}$ for $n_1=3$.}\label{tab:2}
\end{longtable}
 \end{center}
\subsubsection{The case $n_1=4$}
Finally, we comment on our attempt in implementing the algorithm of Subsect.~\ref{subsec:algo} on the $n_1=4$ case. The main information of our attempt on the $n_1=4$ case is summarized in Table~\ref{tab:summary2}.

First, we attempted to run algorithm of Subsect.~\ref{subsec:algo} as it is. So, we computed $n_{\min} = 50$, $\mathcal C_{4n_1} = 1.9356\cdot 10^{200}$ and $q_{\max} = 199211272511189639 \approx 2\cdot 10^{17}$. However, the vast size of $q_{\max}$ makes it clear that even storing the numbers $n_{\max}^{(q)}$ for each $3\leq q\leq q_{\max}$ is not straightforward and, in fact, our computer crashed during the computation of these numbers.

Then, we confined ourselves (arbitrarily) to $n_{\min} = n_1^{n_1} = 256$. In this case, we computed $q_{\max} = 22271$. This made the completion of Step~\ref{s3} feasible with a total number of 82148 possible exceptional pairs. However, due to the large size of these numbers, our computer failed to complete Step~\ref{s4} after having examined about 2200 pairs, with no exceptional pair found until this point.

Our attempt on the $n_{\min} = n_1^5 = 1024$ case faced similar difficulties, although, in this case, we got a significantly smaller $q_{\max} = 27$ and after Step~\ref{s3} we had 5936 possible exceptional pairs. This time, only around 1650 of them were in fact checked during Step~\ref{s4}, before our computer's crash, without any exceptions up to this point.

Finally, we successfully run the algorithm with the twist that we chose $n_{\min}=n_1^6 = 4096$, where we obtained $q_{\max} = 4$. Also, we note that in this case, we got $n_{\max}^{(2)} = 13385$, $\mathcal C_{4n_1}^{(2)} = 1.0106\cdot 10^{200}$, $n_{\max}^{(3)} = 4605$, and $\mathcal C_{4n_1}^{(3)} = 1.0366\cdot 10^{200}$, respectively.
This completes the proof of Theorem~\ref{thm:main_concrete}. 

\begin{table}[h!]
\begin{center}
\begin{tabular}{ll|p{2.5em}p{2.5em}p{2.5em}p{2.5em}p{2.5em}p{2.5em}|l}
 \multicolumn{2}{c}{} & \multicolumn{6}{c}{\#{} of pairs after Step} & {} \\ 
   $n_{\min}$ & $q_{\max}$ & \ref{s4} & \ref{s5} & \ref{s6}& \ref{s8}.\ref{s4} & \ref{s8}.\ref{s5} & \ref{s8}.\ref{s6} & time \\ \hline
   $50$ & $\approx 2\cdot 10^{17}$ & - & - & - & - & - & - & - \\ 
   $256$ & $22271$ & 82148 & - & - & - & - & - & - \\
   $1024$ & $27$ & 5936 & - & - & - & - & - & - \\
   $4096$ & $4$ & 188 & 0 & 0 & 3317 & 0 & 0 & $\sim$8 min.
\end{tabular}
\end{center}
  \caption{Summary of the implementation of the algorithm of Subsect.~\ref{subsec:algo} for $n_1=4$, with various choices of $n_{\min}$.\label{tab:summary2}}
\end{table}
\section{Discussion} \label{sec:discussion}
We conclude this work with a short discussion about possible future research directions regarding Conjecture~\ref{conj:mm}.

The first one is hinted in Remark~\ref{remark:sieves}. In particular, we believe that, with considerably additional computational efforts and resources, Theorem~\ref{thm:main_concrete} can potentially be extended to $n_1=4$. However, increasing $n_1$ further should be feasible only after implementing the methods described in Remark~\ref{remark:sieves}. Our impression is that these methods should be able to increase $n_1$ even to two-digit numbers.


Also, we stress that in the literature, for example see \cite{aravena20}, one can find additional constructions of completely normal elements. Possibly, these constructions can be exploited into establishing the MM property for additional finite fields extensions, or even into completely resolving Conjecture~\ref{conj:mm}.
\section*{Acknowledgment}
We are grateful to D.~Hachenberger for pointing out a serious mistake in our original manuscript and his useful comments.
  \bibliography{refs_mm.bib} 
\appendix
\section{The translate properties and their connection with the MM property} \label{app:tp}
In this appendix, we dive deeper in the perspective raised in Remark~\ref{remark:tp}.
We start with defining the properties that are of interest for us.
\begin{definition}
Let $\F_{q^n}/\F_q$ be a finite field extension such that, for every $a\in\F_{q^n}$ such that $\F_q(a) = \F_{q^n}$, some primitive element lies within the set of translates $\mathcal T_a$. Then the extension possesses the \emph{translate property}. If, every such set of translates $\mathcal T_a$ contains two distinct primitive elements, then the extension possesses the \emph{strong translate property}.
\end{definition}
Furthermore, it is known that, for fixed $n$, if $q$ is large enough, then the extension $\F_{q^n}/\F_q$ possesses the translate property.
\begin{theorem}[The Translate Property] \label{thm:tp}
Let $n$ be a positive integer. There exists some $\mathrm{TP}(n)$ such that for every prime power $q>\mathrm{TP}(n)$, the extension $\F_{q^n}/\F_q$ possesses the translate property.
\end{theorem}
The above was initially established by Davenport~\cite{davenport37} for prime $q$ and extended to its stated form by Carlitz~\cite{carlitz53a}. In a similar fashion, in this work, we prove the strong version as follows.
\begin{theorem}[The Strong Translate Property] \label{thm:stp}
Let $n$ be a positive integer. There exists some $\mathrm{STP}(n) \geq \mathrm{TP}(n)$ such that for every prime power $q>\mathrm{STP}(n)$, the extension $\F_{q^n}/\F_q$ possesses the strong translate property.
\end{theorem}
\begin{proof}
We assume that $q>\mathrm{TP}(n)$ and fix some $b\in\F_{q^n}$, such that $\F_q(b) = \F_{q^n}$. We will show that, if $q$ is large enough, $\mathcal T_b$ contains two primitive elements.
Theorem~\ref{thm:tp} implies that, for some $c\in\F_q$, the element $a=b+c$ is primitive, while clearly $\mathcal T_b = \mathcal T_a$, that is, if the set $\mathcal T_a\setminus\{ a\}$ contains a primitive element, our proof is complete.

The number of primitive elements within the set $\mathcal T_a \setminus\{ a\}$ is
\begin{align} 
\mathcal N_{\mathrm{STP}} & = \sum_{c\in\F_q^*} \omega(a +c) \nonumber \\
& = \theta(q') \sum_{d\mid q'}\frac{\mu(d)}{\phi(d)} \sum_{\substack{\chi\in\widehat{\F_{q^n}^*}\\ \ord(\chi)=d}} \sum_{c\in\F_q^*} \chi(a+c) \nonumber \\
& = \theta(q') (S_1 + S_2) , \label{eq:app1}
\end{align}
where $q'=q^n-1$, $S_1$ is the part of the sum that corresponds to $d=1$ and $S_2$ is the remaining sum.

Clearly, $S_1=q-1$. Further, the fact that, for $d\mid q'$, there are $\phi(d)$ multiplicative characters of order $d$ and Theorem~\ref{thm:char-sum} yield $|S_2|\leq W(q')(nq^{1/2}+1)$, hence, $|S_2|<2nW(q')\sqrt{q}$.

Now, Lemma~\ref{lemma:apostol} yields that, for $q$ large enough, $S_1 = q-1 > |S_2|$, which, combined with Eq.~\eqref{eq:app1} implies $\mathcal N_{\mathrm{STP}}\neq 0$. The result follows.
\end{proof}
Now, observe that Theorem~\ref{thm:completely_normal_translates} yields that, if $p\mid n$, the translate property  implies the MM property,
while, in any case, the strong translate property implies the MM property.
In other words, we have the following.
\begin{theorem} \label{thm:stpmm}
Let $\F_{q^n}/\F_q$ be a finite field extension, such that, either
\begin{enumerate}
\item $p\mid n$ and $q>\mathrm{TP}(n)$, or,
\item $q>\mathrm{STP}(n)$,
\end{enumerate}
then this extension possesses the MM property.
\end{theorem}
\begin{remark}
Despite the fact that Theorem~\ref{thm:stpmm} might appear like a meaningful way to attack Conjecture~\ref{conj:mm}, this is, unfortunately, not the case. This is due to the fact that, although little is known about the numbers $\mathrm{TP}(n)$ and only a handful of them are known, these numbers suggest that, in principle, they tend to be significantly larger than $n$. In particular, see \cite{baileycohensutherlandtrudgian19} and the references therein, we know that $\mathrm{TP}(2)=1$, $\mathrm{TP}(3)=37$, and $43\leq \mathrm{TP}(4)\leq 102829$. In other words, Theorems~\ref{thm:hb1}, \ref{thm:gk1}, and \ref{thm:gk2} probably already cover the extensions that are known to possess the (strong) translate property.
\end{remark}
\end{document}